\DeclareMathOperator{\sign}{sgn}
\DeclareMathOperator{\linspan}{span}
\DeclareMathOperator{\ex}{ex}
\DeclareMathOperator{\supp}{supp}
\DeclareMathOperator{\sym}{sym}
\DeclareMathOperator*{\argmin}{arg\,min}
\newcommand{\be}{\beta}
\newcommand{\N}{\mathbb N}
\newtheorem{assumption}{Assumption}
\newtheorem{remark}{Remark}
\newtheorem{lemma}{Lemma}
\newtheorem{theorem}{Theorem}
\newtheorem{proposition}{Proposition}
\theoremstyle{definition}
\newcommand{\clconv}{\overline{\mathrm{conv}}}
\newcommand{\NN}{\mathbf{N}}
\newcommand{\RR}{\mathbf{R}}
\newcommand{\lpspace}[1]{\ell^{#1}}
\newcommand{\ccspace}{c_0}
\newcommand{\abs}[2][]{|{#2}|_{#1}}
\newcommand{\norm}[2][]{\|{#2}\|_{#1}}
\newcommand{\Bignorm}[2][]{\Bigl\|{#2}\Bigr\|_{#1}}
\newcommand{\card}{\#}
\newcommand{\set}[2]{\{{#1}\,\bigl|\,{#2}\}}
\newcommand{\Bigset}[2]{\Bigl\{{#1}\,\Bigl|\,{#2}\Bigr\}}
\newcommand{\sett}[1]{\{{#1}\}}
\newcommand{\wstarrightarrow}{\rightharpoonup^*}
\newcommand{\wrightarrow}{\rightharpoonup}
\newcommand{\scp}[2]{\langle{#1},{#2}\rangle}
\newcommand{\without}{\backslash}
\newcommand{\placeholder}{\,\cdot\,}
\newcommand{\fd}{f^{\delta}}
\def\be#1{\begin{equation}\label{#1}}
\def\ee{\end{equation}}
\def\dup#1{\langle #1\rangle}
\def\norm#1{\hspace*{0.2ex}\|#1\|}
\newcommand{\ud}[1][]{u^{\dagger#1}}
\def\proof{\par{\bf Proof:} \ignorespaces}
\def\endproof{{\ \vbox{\hrule\hbox{%
   \vrule height1.3ex\hskip0.8ex\vrule}\hrule

    }}\par}
\newcommand{\mA}{\mathcal{A}}
\newcommand{\mH}{\mathcal{H}}
\newcommand{\exclude}[1]{}
\begin{document}

\title{The least error method for sparse solution reconstruction}
\author{K. Bredies${}^1$ and B. Kaltenbacher${}^2$ and E. Resmerita${}^2$}
\address{${}^1$ Institute of Mathematics and Scientific Computing, Karl-Franzens-Universit\"at Graz, Austria,\\
${}^2$ Institute of Mathematics, Alpen-Adria-Universit\"at Klagenfurt, Austria}
\ead{kristian.bredies@uni-graz.at and barbara.kaltenbacher@aau.at and elena.resmerita@aau.at}
\begin{abstract}
This work deals with a regularization method enforcing solution sparsity of linear  ill-posed problems by appropriate discretization in the image space. Namely, we formulate the so called  least error method in an $\ell^1$ setting and perform the convergence analysis by choosing the discretization level according to  an a priori rule, as well as two a posteriori rules, via the discrepancy principle and the monotone error rule, respectively.
Depending on the setting, linear or sublinear convergence rates  in the $\ell^1$-norm are obtained under a source condition yielding sparsity of the solution. A part of the study is devoted to analyzing the structure of the approximate solutions and of the involved source elements.

\end{abstract}
%\pacs{}
%\keywords{ill-posed problem, regularization, sparsity, least error method}
%\submitto{\IP}

\maketitle

\section{Introduction}
In order to recover sparse solutions of linear operator equations, it is common to consider Tikhonov regularization with $\ell^1$-penalty (see, e.g. \cite{DDD04}). In this paper, we focus on a different regularization method based on discretization known in the literature as  the  least error or the dual least squares method, and taking advantage of the $\ell^1$ framework. The reader is referred e.g., to \cite{Natt77}, \cite{VH85}, \cite{Engl96} for some classical analysis of the above method in Hilbert spaces and to \cite{HKKR15}, for recent results in some classes of Banach spaces. Thus, it has been shown in \cite{HKKR15} that the least error method converges in spaces with good smoothness and convexity properties, which is not the case in the considered sparsity context. Thus, to the best knowledge of the authors, this is the first time the least error approach is analyzed in the context of sparse regularization in $\ell^1$. Technically speaking, this analysis differs essentially as regards stability estimates and convergence of the method for an a priori rule, which is the backbone of convergence for the method combined with the monotone error rule or the discrepancy principle for choosing the discretization level, playing the role of a regularization parameter here.
Under a source condition we get a convergence rate result, not only for the Bregman distance, but even for the full $\ell^1$ norm. This is a consequence of the sparsity structure of the exact solution induced by the source condition, that enables a special stability estimate and an ideal error rate $O(\delta)$ as the noise level $\delta$ tends to zero, under certain a priori information, similar to \cite{GHO08} and \cite{BL09, G09} for the case of non-convex sparse regularization. A convergence rate with a posteriori choice of the discretization level can be alternatively obtained with the discrepancy principle.

Throughout the paper, let $H$ be a Hilbert space and $A: H \to \ccspace$ be linear and continuous.
Then, with the identification $\ccspace^* = \lpspace{1}$,
the mapping $A^*: \lpspace{1} \to H$ is weak*-to-weak
continuous in addition
to being linear and continuous. For the time being, we do not assume that
$A$ is injective, but will make this assumption later, observing already that
$A^*$ will also be injective in this case.

We would like to solve the inverse problem
\begin{equation}\label{equation}
A^* u = f
\end{equation}
provided only data $f^\delta$ satisfying 
\begin{equation}\label{delta}
\norm{f^\delta - f}_H \leq \delta\,,
\end{equation}
an assumption that is also made throughout the paper.

The aim of this study is to solve the equation by discretization in the image space $H$. That is, choose a sequence of subspaces $(H_n)_n$ of $H$ where each
$H_n$ is finite-dimensional with dimension $n$ and
 $\lim_{n \to \infty} P_n v = v$ for each $v \in H$, with $P_n$ denoting
the orthogonal projection onto $H_n$, i.e., 
\begin{equation}\label{Pn}
P_n=\mbox{Proj}_{H_n}.
\end{equation}
The least error method defines
\be{le}
u^n\in \argmin\{ \norm{u}_{1} \, : \, \forall z^n\in H_n \, : \ \dup{z^n,A^*u}=\dup{z^n,\fd}\},
\ee
which is equivalent to $u^n$ solving
\begin{equation}
  \label{eq:minprob}
  \min_{u \in \lpspace{1}} \ \norm{u}_{1} \qquad
  \text{subject to} \qquad
  P_nA^* u = P_n f^\delta.
\end{equation}
The structure of this work is as follows.  Well-definedness, an equivalent formulation of the least error method and a few useful estimates are shown in Section 2. A convergence analysis for an a priori choice of the discretization level, as well as for two a posteriori choices is provided in Section 3, 4, respectively. Convergence rates up to $O(\delta)$ are derived in Section 5, where  the specific structure of the (approximate) solutions and the corresponding source elements are also discussed.  Section 6 shortly reviews some particular instances of the least error method in the current setting. 

\section{The least error method in $\ell^1$}

We will use the identification of $\ell^1$ with the dual  of the space $c_0$ of sequences converging to zero  and the weak$^*$ compactness of the sublevel sets of the $\ell^1$ norm. Recall  that 
\be{subgr}
\partial(\norm{\cdot}_1)(u)=\{\xi\in\ell^\infty: 
\norm{\xi}_\infty \leq 1 \quad
\text{and} \quad 
\dup{\xi,u}=\norm{u}_1\}
\ee
due to convexity and homogeneity of the $\ell^1$-norm. More specifically, by
exploiting the  structure of the $\lpspace{1}$-norm, we have
\be{sign}
\partial(\norm{\cdot}_1)(u) = 
\sign(u) = \set{\xi \in \lpspace{\infty}}{\xi_i = \tfrac{u_i}{\abs{u_i}}
  \ \text{if} \ u_i \neq 0, \ \xi_i \in [-1,1] \ \text{if} \ u_i = 0}.
\ee

\bigskip

\noindent Problem \eqref{le} is well-defined, as stated below.

\begin{proposition}\label{welldef}
Assume that 
\be{kernel}
\mathcal{N}(A)\cap H_n =\{0\}\,.
\ee
Then the set of minimizers 
$\argmin\{ \norm{u}_1 \, : \, \forall z^n\in H_n \, : \ \dup{z^n,A^*u}=\dup{z^n,\fd}\}$ is nonempty. 
\end{proposition}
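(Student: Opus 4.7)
The plan is to verify nonemptyness of the feasible set and then to extract a minimizer from a minimizing sequence by a standard weak$^*$ compactness argument, using $\ell^1 = c_0^*$ and the weak$^*$--to--weak continuity of $A^*$.

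First I would rewrite the constraint $\forall z^n \in H_n : \dup{z^n, A^*u} = \dup{z^n, \fd}$ as the finite-dimensional equation $P_n A^* u = P_n \fd$, and check that this equation is solvable. Consider the continuous linear map $T_n := P_n A^* : \lpspace{1} \to H_n$. Since $H_n$ is finite-dimensional, $T_n$ has closed range, so $\mathrm{range}(T_n)$ coincides with $H_n$ iff its orthogonal complement in $H_n$ is $\{0\}$. For $z \in H_n$, one has $z \perp \mathrm{range}(T_n)$ iff $\dup{z, P_n A^* u} = \dup{A z, u} = 0$ for all $u \in \lpspace{1}$, which (since $Az \in c_0$ and $\lpspace{1} = c_0^*$) is equivalent to $Az = 0$. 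The assumption \eqref{kernel} rules this out unless $z = 0$, so $T_n$ is surjective onto $H_n$; in particular $P_n \fd \in \mathrm{range}(T_n)$, and the feasible set $F := \{ u \in \lpspace{1} : P_n A^* u = P_n \fd\}$ is nonempty, convex, and (weak$^*$) closed.

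Next I would take a minimizing sequence $(u_k) \subset F$ with $\norm{u_k}_1 \to m := \inf_{u \in F} \norm{u}_1 < \infty$. The sequence is bounded in $\lpspace{1}$, so by Banach--Alaoglu (using $\lpspace{1} = c_0^*$) a subsequence $u_{k_j}$ converges weakly$^*$ to some $u^\ast \in \lpspace{1}$. Because $A^*$ is weak$^*$--to--weak continuous by assumption and $P_n$ is weak-continuous, $T_n u_{k_j} \to T_n u^\ast$ weakly in $H_n$; finite-dimensionality upgrades this to norm convergence, giving $P_n A^* u^\ast = P_n \fd$, i.e.\ $u^\ast \in F$. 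Since $\norm{\cdot}_1$ is weak$^*$ lower semicontinuous on $c_0^*$, $\norm{u^\ast}_1 \le \liminf_j \norm{u_{k_j}}_1 = m$, so $u^\ast$ attains the infimum.

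The only subtle point is the surjectivity of $T_n$ onto $H_n$, where I must take care that the annihilator argument is carried out on the correct side (pairing elements of $H_n \subset H$ against $u \in \lpspace{1}$ through $A$), and the use of $\lpspace{1} = c_0^*$ to pass from the vanishing of the duality pairing to $Az = 0$. The remainder is a routine direct-method argument enabled by the $c_0^*$-structure of $\lpspace{1}$.
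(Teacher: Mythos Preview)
Your proof is correct and follows essentially the same route as the paper: surjectivity of $P_nA^*$ onto $H_n$ via the kernel assumption~\eqref{kernel} (the paper phrases this as $\mathcal{R}(P_nA^*)=\mathcal{N}(AP_n)^\perp=H_n$, which is your annihilator argument in adjoint form), followed by the direct method using weak$^*$ compactness of bounded sets in $\ell^1=c_0^*$, weak$^*$-closedness of the feasible set (from weak$^*$-to-weak continuity of $A^*$), and weak$^*$ lower semicontinuity of $\norm{\cdot}_1$. Your version is slightly more explicit about the minimizing-sequence extraction, but the substance is identical.
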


\begin{proof}
The proof is similar to the one in \cite{HKKR15}, showing that the feasible set is nonempty. Since \eqref{kernel} implies $\mathcal{N}(AP_n)=\{0\}$ and the range of the operator $P_nA^*$ is finite dimensional, hence closed, we can conclude
\[
\mathcal{R}(P_nA^*)=\mathcal{R}((AP_n)^*)=\mathcal{N}(AP_n)^\bot=H_n\,.
\]
 Weak$^*$-weak sequential continuity (which is ensured here, as mentioned above) of the operator $A^*$   implies weak$^*$-closedness of the feasible set. This together with coercivity of the objective function and  weak$^*$ compactness of the sublevel sets of the $\ell^1$ norm yield the result.
\end{proof}\medskip

Note that  weak$^*$-weak sequential continuity of the operator governing the equation to be solved has been considered in \cite{BFH13}, in the context of Tikhonov type regularization. \smallskip

\begin{proposition}\label{equiv} Let the assumptions of Proposition \ref{welldef}  be satisfied. Then \eqref{le} is equivalent to
\[
u^n\in  E_n \mbox{ and } \forall z^n\in H_n \, : \ \dup{z^n,A^*u^n}=\dup{z^n,\fd},   
\]
where 
\begin{equation}
  \label{eq:en_sets}
  E_n=(\partial \|\cdot\|_1)^{-1}(A H_n ).  
\end{equation}
\end{proposition}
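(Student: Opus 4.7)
My plan is to establish the equivalence by treating the two directions separately, relying on convex subdifferential calculus for the harder direction. Both formulations include the affine constraint $P_n A^* u^n = P_n f^\delta$ (as feasibility in \eqref{le} and explicitly in the $E_n$-formulation), so the actual content is the equivalence between being a minimizer of $\|\cdot\|_1$ over this affine set and membership in $E_n$.

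For the sufficient direction I would argue directly via the subgradient inequality. If $u^n$ is feasible and $u^n \in E_n$, pick $\bar z \in H_n$ with $A\bar z \in \partial \|u^n\|_1$. For every feasible $u$,
\[
\|u\|_1 \geq \|u^n\|_1 + \dup{A\bar z,\, u - u^n} = \|u^n\|_1 + \dup{\bar z,\, A^*(u - u^n)},
\]
and the last inner product vanishes, since $A^*(u - u^n) \perp H_n$ (both $u$ and $u^n$ satisfy the common constraint); hence $u^n$ is a minimizer.

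For the necessary direction I would invoke the convex subdifferential sum rule. Writing the feasible set as $C = u^n + V$ with $V = \mathcal{N}(P_n A^*) \subset \lpspace{1}$, Fermat's rule gives $0 \in \partial(\|\cdot\|_1 + \iota_C)(u^n)$. Because $\|\cdot\|_1$ is finite and continuous on all of $\lpspace{1}$, the sum rule applies and yields $0 \in \partial \|u^n\|_1 + V^\perp$. I would then compute $V^\perp$ via the closed range theorem: viewing $P_n A^* : \lpspace{1} \to H_n$, one has $(P_nA^*)^* z = A z$ for $z \in H_n$, and the range $AH_n$ is finite-dimensional, hence closed, so $V^\perp = AH_n$. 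As $AH_n$ is a linear subspace (in particular symmetric under negation), we obtain some $\xi \in \partial \|u^n\|_1 \cap AH_n$, i.e.\ $u^n \in E_n$.

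The main obstacle is justifying the subdifferential calculus in the non-reflexive $\lpspace{1}$ setting, but this is saved by two facts: the $\lpspace{1}$-norm is everywhere continuous (so the sum rule requires no extra constraint qualification), and the constraint maps into the finite-dimensional space $H_n$, making $AH_n$ automatically closed and the annihilator identity $V^\perp = \mathcal{R}((P_nA^*)^*)$ immediate, with no delicate weak$^*$-closures to worry about. An equivalent route via Fenchel--Rockafellar duality produces the same Lagrange multiplier $\bar z \in H_n$ satisfying $-A\bar z \in \partial \|u^n\|_1$, from which the conclusion follows identically.
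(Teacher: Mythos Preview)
Your proof is correct and follows essentially the same Lagrange-multiplier logic as the paper, but you spell out what the paper packages into a citation. The paper reformulates \eqref{le} as $\min\{\phi(u):G(u)=0\}$ with $G(u)=Tu+b$ for $T:\ell^1\to\RR^n$, $Tu=(\langle A^*u,e_i\rangle)_i$, then invokes a Lagrange multiplier theorem from an external reference (Th.~3.20 in \cite{BP12}) to obtain the characterization $\partial\phi(u^n)\cap\mathcal{R}(T^*)\neq\emptyset$, and finishes by computing $\mathcal{R}(T^*)=AH_n$. You instead prove both directions from scratch: the sufficient direction by the subgradient inequality (a step the paper does not even state separately), and the necessary direction via Fermat's rule, the sum rule (valid since $\|\cdot\|_1$ is continuous), and the closed-range annihilator identity $\mathcal{N}(P_nA^*)^\perp=\mathcal{R}((P_nA^*)^*)=AH_n$. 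The two arguments are mathematically the same at the core; yours is self-contained and makes explicit exactly where finite-dimensionality of $H_n$ and continuity of the norm enter, while the paper's version is terser but relies on the reader accepting the cited KKT-type theorem as applicable in this Banach-space setting.
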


\begin{proof}
Let $H_n= \linspan\{e_1,...,e_n\}$, i.e., 
the elements $e_i$ form a basis of $H_n$. Then problem \eqref{le} is equivalent to 
\be{le1}
u^n\in\mbox{argmin}\{ \norm{u}_1 \, : \, G(u)=0\},
\ee
where $G:\ell^1\to {\mathbb{R}}^n$ is given by $G(u)=Tu+b$ with $Tu=(\langle A^*u,e_i\rangle)_i$ and $b=(-\langle\fd,e_i\rangle)_i$. Since the function $\phi=\|\cdot\|_{1}$  is continuous and the finite dimensional rank operator $T$ has a closed range, one can apply Th.~3.20 in \cite{BP12} and obtain that $u^n$ is a solution of \eqref{le1} if and only if
$$\partial \phi(u^n)\cap \mathcal{R}(T^*)\neq\emptyset.$$ 
As for any $u\in\ell^1$, $p\in\mathbb{R}^n$
\[
(Tu)^T p =\sum_{i=1}^n \dup{A^*u,e_i}p_i=\dup{u,A \sum_{i=1}^n p_ie_i}
\]
and therefore 
$\mathcal{R}(T^*)=A(\linspan\{e_1,...,e_n\})=AH_n$, 
the proof is complete.
\end{proof}

\begin{remark}
For interpreting the optimality conditions derived in Proposition~\ref{equiv}, 
we recall that
\[
(\partial\norm{\cdot}_1)^{-1} 
= 
\partial(I_{\sett{\norm{\xi}_\infty \leq 1}}),
\]
where the subgradient has to be understood as
subset of the predual space $\lpspace{1}$ and reads as
\[
\begin{aligned}
  \norm{\xi}_\infty > 1: & & 
  \partial(I_{\sett{\norm{\xi}_\infty \leq 1}})(\xi) 
  &= \emptyset, \\
  \norm{\xi}_\infty \leq 1: & & 
  \partial(I_{\sett{\norm{\xi}_\infty \leq 1}})(\xi) 
  &=
  \{u \in \lpspace{1} \, | \, u_i = 0 \ \text{if} \ \xi_i \in {]{-1,1}[}, \\
  & & & 
  \qquad
  u_i \geq 0 \ \text{if} \ \xi_i = 1 \ \text{and} \ 
  u_i \geq 0 \ \text{if} \ \xi_i = -1\}.
\end{aligned}
\]
Thus, according to Proposition~\ref{equiv}, $u^n$ solves \eqref{le} iff there exists an element $v^n\in H_n$ such that
\[
\norm{Av^n}_\infty\leq 1 \mbox{ and } (u^n)_i\begin{cases} 
=0 & \mbox{ if }(Av^n)_i\in]-1,1[\\
\geq0 & \mbox{ if }(Av^n)_i=1\\
\leq0 & \mbox{ if }(Av^n)_i=-1
\end{cases}
\]
\end{remark}

\medskip
\noindent
In order to show stability of  the discretization method, we define 
\be{kappa}
\kappa_n=\sup_{z^n\in H_n}\frac{\norm{z^n}}{\norm{Az^n}_\infty}=\frac{1}{\inf_{\norm{z^n}=1}\norm{Az^n}_\infty}.
\ee
Note that these values are finite due to \eqref{kernel}. 

%Recall that
%$\xi=(\xi_k)\in\partial \|\cdot\|_1(u)$ with $u=(u_k)$ if and only if
%\[
%\xi_k=\left\{ \begin{array}{cll}
%	1, &  \mbox{if}\,u_k>0 \\
%	-1, &  \mbox{if}\,u_k<0 \\
%\in[-1,1] &  \mbox{if}\,u_k=0 
%\end{array} \right.
%\]
The Bregman distance with respect to the $\ell^1$ norm and an element $\xi_v\in \partial \|\cdot\|_1(v)$ is defined by
\[
D(u,v)=\norm{u}_1-\norm{v}_1-\dup{\xi_v,u-v}=\norm{u}_1-\dup{\xi_v,u},
\]
see~\eqref{subgr},
the symmetric Bregman distance by
 \be{symmBD}
D^{\sym}(v,u)=D(v,u)+D(u,v)=\dup{\xi_v-\xi_u,v-u},
\ee
with $\xi_u\in \partial \|\cdot\|_1(u)$. 

\begin{lemma} (compare \cite[Lemma 4.5]{HKKR15})
Under the conditions of Proposition~\ref{welldef},
\be{in}
\norm{u^n}_1\leq \delta \kappa_n+\norm{\ud}_1\,
\ee
holds.
\end{lemma}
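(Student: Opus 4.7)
The strategy is to exploit optimality of $u^n$ by producing a feasible competitor for~\eqref{eq:minprob} that is close to $\ud$. Since $A^*\ud = f$, one has $P_nA^*\ud = P_n f$, which fails the constraint $P_nA^*u = P_n\fd$ by the term $P_n(\fd - f)$, whose $H$-norm is bounded by $\delta$ because of~\eqref{delta} and $\|P_n\|\leq 1$. I thus look for $w\in\lpspace{1}$ solving $P_nA^*w = P_n(\fd - f)$ with $\|w\|_1 \leq \kappa_n\delta$; then $\ud + w$ is feasible and minimality of $u^n$ will give the claim.

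The core step is therefore: \emph{for every $g \in H_n$ there exists $w\in\lpspace{1}$ with $P_nA^*w = g$ and $\|w\|_1 \leq \kappa_n\|g\|$.} I plan to obtain $w$ via Hahn-Banach in the predual $\ccspace$. Since $A$ takes values in $\ccspace$, $A(H_n) \subset \ccspace$; condition~\eqref{kernel} makes $A|_{H_n}$ injective, and the definition~\eqref{kappa} of $\kappa_n$ reads $\|z\| \leq \kappa_n\|Az\|_\infty$ for every $z \in H_n$. Define the linear functional
\[
\phi \colon A(H_n) \to \mathbb{R}, \qquad \phi(Az) := \dup{z,g},
\]
which is well-defined by injectivity of $A|_{H_n}$. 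The above estimate yields $|\phi(Az)| \leq \|z\|\,\|g\| \leq \kappa_n\|g\|\,\|Az\|_\infty$, so $\phi$ is continuous on the subspace $A(H_n) \subset \ccspace$ with norm at most $\kappa_n\|g\|$. By Hahn-Banach I extend $\phi$ to a functional on all of $\ccspace$ of the same norm; this extension is an element $w \in \ccspace^* = \lpspace{1}$ with $\|w\|_1 \leq \kappa_n\|g\|$. For any $z\in H_n$ one then has
\[
\dup{P_nA^*w,z} = \dup{A^*w,z} = \dup{w,Az} = \phi(Az) = \dup{z,g},
\]
so $P_nA^*w = g$, since both sides lie in $H_n$.

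Applying this construction with $g = P_n(\fd - f)$ (so that $\|g\| \leq \delta$) produces $w$ with $\|w\|_1 \leq \kappa_n\delta$ and $P_nA^*(\ud + w) = P_n f + P_n(\fd - f) = P_n\fd$. Hence $\ud + w$ is feasible for~\eqref{eq:minprob}, and minimality of $u^n$ gives
\[
\|u^n\|_1 \leq \|\ud + w\|_1 \leq \|\ud\|_1 + \|w\|_1 \leq \|\ud\|_1 + \kappa_n\delta,
\]
which is~\eqref{in}. The only genuinely nontrivial ingredient is the Hahn-Banach step; it relies essentially on $A$ mapping into $\ccspace$ rather than only $\lpspace{\infty}$, so that the extended functional lies in the predual $\lpspace{1}$ and therefore yields an actual preimage under $P_nA^*$. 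An equivalent route is to recognise the quantity $\min\{\|w\|_1 : P_nA^*w = g\}$ as the value of a linear program dual to $\sup\{\dup{g,z} : z\in H_n,\ \|Az\|_\infty \leq 1\}$, which is bounded by $\kappa_n\|g\|$; but the extension argument above is shorter and avoids invoking a separate duality theorem.
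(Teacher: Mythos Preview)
Your argument is correct, but it takes a different route from the paper's own proof. The paper argues on the \emph{dual} side: from Proposition~\ref{equiv} there is a $v^n\in H_n$ with $Av^n\in\partial\norm{\cdot}_1(u^n)$, hence $\norm{Av^n}_\infty\le 1$ and $\norm{u^n}_1=\dup{Av^n,u^n}=\dup{v^n,A^*u^n}=\dup{v^n,\fd}$. Splitting $\fd=(\fd-f)+A^*\ud$ and using $\norm{v^n}\le\kappa_n\norm{Av^n}_\infty\le\kappa_n$ together with $\dup{Av^n,\ud}\le\norm{\ud}_1$ gives the bound in two lines. Your proof is the \emph{primal} counterpart: instead of invoking the Lagrange multiplier $v^n$, you manufacture a feasible competitor $\ud+w$ by a Hahn--Banach extension in $\ccspace$, which is precisely the dual statement that the optimal value of $\min\{\norm{w}_1:P_nA^*w=g\}$ equals $\sup\{\dup{g,z}:z\in H_n,\ \norm{Az}_\infty\le1\}\le\kappa_n\norm{g}$. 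The paper's approach is shorter because Proposition~\ref{equiv} has already done the duality work; your approach is self-contained (it does not rely on Proposition~\ref{equiv}) and makes transparent why the predual structure $A:H\to\ccspace$, rather than merely $A:H\to\ell^\infty$, is essential for producing a competitor in $\ell^1$.
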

%\begin{proof}
\proof
Due to Proposition \ref{equiv}, each solution $u^n$ of \eqref{le} satisfies $\xi^n=Av^n\in \partial \|\cdot\|_1(u^n)$ for some $v^n\in H_n$. Thus,
\begin{align}
\notag
\norm{u^n}_1=\langle\xi^n,u^n\rangle &=\langle Av^n,u^n\rangle\\
\notag
&=\langle v^n,A^{*}u^n\rangle=\langle v^n,\fd\rangle\\
\notag
&=\langle v^n,\fd-f\rangle+\langle Av^n,\ud\rangle\\
\notag
&\leq\delta \kappa_n\|Av^n\|_{\infty}+\norm{\ud}_1\|Av^n\|_{\infty}\\
&\leq \delta \kappa_n+\norm{\ud}_1 \tag*{\endproof}
\end{align}
%\end{proof}

\begin{proposition}\label{prop:stabest}
 Let the assumptions of Proposition \ref{welldef} be satisfied  and  let
\[
u^{n,i}\in\mbox{argmin}\{ \norm{u}_1 \, : \, \forall z^n\in H_n \, : \ \dup{z^n,A^*u}=\dup{z^n,f^i}\} \quad \text{for} \quad i=1,2\,,
\]
be well-defined solutions of \eqref{eq:minprob} corresponding to data $f^1, f^2\in H$, respectively. 

Then the following estimates hold:
\be{noisy_estim}
D^{\sym}(u^{n,1},u^{n,2})\leq 2\kappa_n \norm{f^1-f^2},
\ee
\be{kk}
|\norm{u^{n,1}}_1-\norm{u^{n,2}}_1|\leq 2\kappa_n \norm{f^1-f^2}.
\ee
\end{proposition}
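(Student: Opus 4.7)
The plan is to exploit the structural characterization from Proposition~\ref{equiv}, which guarantees that each minimizer $u^{n,i}$ admits a subgradient $\xi^{n,i}=Av^{n,i}$ with $v^{n,i}\in H_n$, together with the constraint identity $\langle z^n,A^*u^{n,i}\rangle=\langle z^n,f^i\rangle$ valid for every $z^n\in H_n$. The key observation is that differences of such source elements again lie in $H_n$, so the constraint equations can be used on them. Converting $\ell^1$/$\ell^\infty$ pairings into Hilbert-space pairings via $A$ and $A^*$ then reduces both estimates to a standard Cauchy--Schwarz step where the quantity $\kappa_n$ enters via its definition~\eqref{kappa}.

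For the symmetric Bregman estimate~\eqref{noisy_estim} I would start from
\[
D^{\sym}(u^{n,1},u^{n,2})=\langle\xi^{n,1}-\xi^{n,2},u^{n,1}-u^{n,2}\rangle=\langle A(v^{n,1}-v^{n,2}),u^{n,1}-u^{n,2}\rangle,
\]
move $A$ to the other side to get $\langle v^{n,1}-v^{n,2},A^*(u^{n,1}-u^{n,2})\rangle$, and then use that $v^{n,1}-v^{n,2}\in H_n$ together with the constraint equations for both problems to replace $A^*(u^{n,1}-u^{n,2})$ by $f^1-f^2$ inside the pairing (the constraint holds for any test element in $H_n$, so the replacement is exact). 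Cauchy--Schwarz yields an upper bound $\|v^{n,1}-v^{n,2}\|\,\|f^1-f^2\|$, and by the definition of $\kappa_n$ one has $\|v^{n,1}-v^{n,2}\|\leq\kappa_n\|A(v^{n,1}-v^{n,2})\|_\infty=\kappa_n\|\xi^{n,1}-\xi^{n,2}\|_\infty\leq 2\kappa_n$, the last inequality using $\|\xi^{n,i}\|_\infty\leq 1$ from~\eqref{subgr}. This produces the factor $2\kappa_n$ exactly as stated.

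For~\eqref{kk} I would apply the subgradient inequality of $\|\cdot\|_1$ in both directions: $\|u^{n,2}\|_1-\|u^{n,1}\|_1\geq\langle\xi^{n,1},u^{n,2}-u^{n,1}\rangle$ and symmetrically with the roles reversed. Rewriting each right-hand side as $\langle v^{n,i},A^*(u^{n,j}-u^{n,i})\rangle=\langle v^{n,i},f^j-f^i\rangle$ using the constraint again, and bounding via $\|v^{n,i}\|\leq\kappa_n\|Av^{n,i}\|_\infty\leq\kappa_n$, gives the desired estimate (with a constant even better than $2\kappa_n$). The only slightly delicate point, and the step I would double-check most carefully, is the justification for swapping $A^*u^{n,i}$ with $f^i$ inside pairings with source elements: this is not an arbitrary approximation but the defining feasibility relation of~\eqref{le}, and it is the precise reason why no approximation term of the form $\|A^*u^{n,i}-f^i\|$ appears in the estimates. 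Everything else is routine duality and Cauchy--Schwarz.
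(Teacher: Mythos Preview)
Your argument for~\eqref{noisy_estim} is essentially identical to the paper's: both use Proposition~\ref{equiv} to obtain $\xi^{n,i}=Av^{n,i}$ with $v^{n,i}\in H_n$, rewrite the symmetric Bregman distance as $\langle v^{n,1}-v^{n,2},f^1-f^2\rangle$ via the feasibility constraints, and bound via $\kappa_n$ and $\norm{\xi^{n,1}-\xi^{n,2}}_\infty\leq 2$.

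For~\eqref{kk} your route genuinely differs from the paper's. The paper introduces an auxiliary minimizer $u$ of the least-error problem with data $f^1-f^2$, bounds $\norm{u}_1=D^{\sym}(u,0)\leq 2\kappa_n\norm{f^1-f^2}$ via the already-proved~\eqref{noisy_estim}, observes that $u^{n,1}+u$ (up to a sign) is feasible for the $f^2$-problem, and concludes $\norm{u^{n,2}}_1\leq\norm{u^{n,1}}_1+\norm{u}_1$ by minimality. Your approach avoids the auxiliary problem entirely: you invoke the subgradient inequality $\norm{u^{n,2}}_1-\norm{u^{n,1}}_1\geq\langle\xi^{n,1},u^{n,2}-u^{n,1}\rangle=\langle v^{n,1},f^2-f^1\rangle$ directly, then bound $\norm{v^{n,1}}\leq\kappa_n\norm{Av^{n,1}}_\infty\leq\kappa_n$. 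This is correct, more elementary, and in fact yields the sharper constant $\kappa_n$ rather than $2\kappa_n$. The paper's detour through an auxiliary minimizer buys nothing here; your direct subgradient argument is preferable.
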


\begin{proof} Let $v^{n,i}\in H_n$ such that $\xi^{n,i}=Av^{n,i}\in \partial (\norm{\cdot}_1)(u^{n,i})$, for $i=1,2$. Then \eqref{noisy_estim} follows from
\begin{eqnarray*}
D^{\sym}(u^{n,1},u^{n,2})&=&\dup{Av^{n,1}-Av^{n,2}, u^{n,1}-u^{n,2}}\\
&=& \dup{v^{n,1}-v^{n,2}, A^*u^{n,1}-A^*u^{n,2}}\\
&=&  \dup{v^{n,1}-v^{n,2}, f^1-f^2}\\
&\leq& \kappa_n\norm{Av^{n,1}-Av^{n,2}}_\infty\norm{f^1-f^2}\\
&\leq& 2\kappa_n\norm{f^1-f^2},
\end{eqnarray*}
where the last inequality is a consequence of \eqref{subgr}.

In order to show \eqref{kk}, let $u\in \ell^1$ be a solution of \eqref{le} with $f^1-f^2$ instead of $\fd$. According to \eqref{noisy_estim},
% in the setting 
plugging in $u$ and $0$ instead of $u^{n,1}$ and $u^{n,2}$ as well as
$f^1 - f^2$ and $0$ instead of $f^1$ and $f^2$, one has
%$u^{n,1}:=u$ and $u^{n,2}:=0$, $f^1:=f^1-f^2$, $f^2:=0$, one has
\begin{equation*}
\norm{u}_1=D^{\sym}(u,0)\leq  2\kappa_n\norm{f^1-f^2}.
\end{equation*}
Since $u^{n,1}+u$ satisfies $\dup{z^n,A^*(u^{n,1}+u)}=\dup{z^n,f^2}$, it follows that
\[
\norm{u^{n,2}}_1\leq \norm{u^{n,1}+u}_1\leq\norm{u^{n,1}}_1+\norm{u}_1\leq\norm{u^{n,1}}_1+ 2\kappa_n\norm{f^1-f^2},
\]
which, by symmetry, implies \eqref{kk}.
\end{proof}

\section{Convergence with a priori choice of $n$}

We state below a convergence result in case of an a priori choice of the discretization dimension $n$.
We will use the following notation for solutions in the exact data case:
\begin{equation}\label{exact_sol}
\ud[,n]\in\mbox{argmin}\{ \norm{u}_{1} \, : \, \forall z^n\in H_n \, : \ \dup{z^n,A^*u}=\dup{z^n,f}\}.
\end{equation}
Note that for the following convergence result,
instead of pointwise convergence of the projections $P_n$, 
%only 
%the weaker condition %~\eqref{adjoint_conv} 
only a weaker condition is needed for proving convergence 
with a priori choice of $n$.

\begin{theorem}\label{apriori_le}
Let the assumptions of Proposition \ref{welldef} be satisfied and assume that \eqref{equation} is solvable.  Additionally, assume that  
\be{adjoint_conv}
\forall z\in H\, : \ \inf_{z^n\in H_n} \norm{A(z-z^n)}\to0\mbox{ as }n\to\infty\,.
\ee
Then the following statements hold:
\begin{enumerate}
\item[(a)] For exact data $\delta=0$ one has convergence 
\[\norm{\ud[,l]-\ud}_1\to 0 \mbox{ as }l\to\infty\,,\]
where $(\ud[,l])_l$ is a subsequence of $(\ud[,n])_n$ with terms given by \eqref{exact_sol} and $\ud$ is a solution of \eqref{equation}. 
\item[(b)] Let the noisy data $\fd$ satisfy \eqref{delta},  the dimension $n=n_{AP}(\delta)$ be chosen such that
\be{apriorirule}
n_{AP}(\delta)\to\infty\mbox{ and }\delta\kappa_{n_{AP}(\delta)}\to0 \mbox{ as }\delta\to0\,
\ee
and the sequence $(\delta_m)_m$ in $(0,+\infty)$ converge to zero. Then there exists a subsequence $(\delta_l)_l$ such that
\be{conv_apriori}
\lim_{l\to \infty}\norm{u^l-\ud}_1=0,
\ee
with $u^l:=u^{n_{AP}(\delta_l)}$ and $\ud$  a solution of \eqref{equation}.
\end{enumerate}
\end{theorem}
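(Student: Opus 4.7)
The plan is to treat (a) and (b) in parallel, since they share a common scheme: bound the approximate solutions in $\ell^1$, extract a weak$^*$ convergent subsequence, show the limit solves \eqref{equation}, identify it as a minimum-$\ell^1$-norm solution, and then upgrade to $\ell^1$-norm convergence. For (a), any solution $\ud$ of \eqref{equation} is feasible in \eqref{exact_sol}, so $\norm{\ud[,n]}_1\leq\norm{\ud}_1$. For (b), Lemma \eqref{in} combined with the a priori rule \eqref{apriorirule} yields $\norm{u^l}_1\leq \delta_l\kappa_{n_{AP}(\delta_l)}+\norm{\ud}_1$, which is uniformly bounded for large $l$. Since $\ell^1$ is the dual of $c_0$, Banach--Alaoglu provides a weak$^*$-convergent subsequence (still denoted by $\ud[,l]$ or $u^l$) with some limit $u^*\in\ell^1$.

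The crux is showing $A^*u^*=f$. Given $z\in H$, I would pick $z^n\in H_n$ using \eqref{adjoint_conv} so that $\epsilon_n:=\norm{A(z-z^n)}_\infty\to 0$. Using the constraint $\dup{z^n,A^*u^n}=\dup{z^n,\fd}$ together with $f=A^*\ud$ for some solution $\ud$, a short computation yields
\[
\dup{z,A^*u^n}-\dup{z,f}=\dup{A(z-z^n),u^n-\ud}+\dup{z^n,\fd-f}.
\]
The first summand is bounded in modulus by $\epsilon_n(\norm{u^n}_1+\norm{\ud}_1)$ and hence vanishes. For the second, the key observation is that the definition \eqref{kappa} of $\kappa_n$ gives
\[
\norm{z^n}\leq\kappa_n\norm{Az^n}_\infty\leq\kappa_n(\norm{Az}_\infty+\epsilon_n),
\]
so $|\dup{z^n,\fd-f}|\leq\kappa_n(\norm{Az}_\infty+\epsilon_n)\delta$, which vanishes by \eqref{apriorirule} (and is trivially zero in case (a)). Hence $A^*u^l\rightharpoonup f$ weakly in $H$; combined with the weak$^*$-to-weak continuity of $A^*$, this forces $A^*u^*=f$.

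It remains to identify $u^*$ and strengthen the convergence. Weak$^*$ lower semicontinuity of $\norm{\cdot}_1$ together with the boundedness above yields $\norm{u^*}_1\leq\liminf_l\norm{u^l}_1\leq\limsup_l\norm{u^l}_1\leq\norm{\ud}_1$ for every solution $\ud$ of \eqref{equation}; since $u^*$ itself solves \eqref{equation}, it is a minimum-$\ell^1$-norm solution, and in particular $\norm{u^l}_1\to\norm{u^*}_1$. Weak$^*$ convergence in $c_0^*=\ell^1$ tested against the standard unit vectors implies componentwise convergence; combined with the convergence of the $\ell^1$ norms this gives $\norm{u^l-u^*}_1\to 0$ by a standard Scheff\'e-type (Brezis--Lieb) argument. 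Setting $\ud:=u^*$ concludes both (a) and (b).

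The main obstacle I expect is controlling the summand $\dup{z^n,\fd-f}$: the approximating sequence $z^n$ furnished by \eqref{adjoint_conv} carries no a priori bound on its $H$-norm, and it is precisely the interplay between $\kappa_n$ (used to bound $\norm{z^n}$) and the a priori rule $\delta\kappa_n\to 0$ that rescues this term. Everything else reduces to standard functional-analytic manoeuvres in the weak$^*$ topology of $\ell^1$.
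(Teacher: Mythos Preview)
Your proof is correct and follows essentially the same scheme as the paper: bound $\norm{u^n}_1$ via feasibility of $\ud$ (respectively via \eqref{in}), extract a weak$^*$ convergent subsequence, use the decomposition $\dup{z,A^*u^n-f}=\dup{A(z-z^n),u^n-\ud}+\dup{z^n,\fd-f}$ together with \eqref{adjoint_conv} to identify the limit as a solution, and upgrade to norm convergence. Your explicit treatment of the noise term via $\norm{z^n}\leq\kappa_n(\norm{Az}_\infty+\epsilon_n)$ is a detail the paper leaves implicit in part (b) (it just refers back to ``the proof idea of (a)''), and your Scheff\'e/Brezis--Lieb argument is the same fact the paper invokes under the name Kadec--Klee property of~$\ell^1$.
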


\begin{remark}
\end{remark}

\begin{proof} (a) Let $\ud$ be a solution of \eqref{equation}. Due to \eqref{exact_sol}, one has
\be{ineq1}
\norm{\ud[,n]}_1\leq \norm{\ud}_1.
\ee
Hence, the sequence $(\ud[,n])_{n}$ has a   weakly$^*$ convergent subsequence $(\ud[,l])_{l}$ with limit point $\tilde u$. Weak$^*$-weak continuity of the operator $A^*$ guarantees weak convergence of  $(A^*\ud[,n_{l}])_{l}$ to $A^*\tilde u$.   Moreover, equality $\dup{z^{l},A^*\ud[,l]-f}=0$ for all $z^{l}\in H_{l}$ and \eqref{adjoint_conv} imply
\[
\begin{aligned}
\forall z\in H\, : \
\dup{z,A^*\ud[,l]-f}=&\inf_{z^{l}\in H_{l}}\dup{z-z^{l},A^*\ud[,l]-f}\\
=&\inf_{z^{l}\in H_{l}}\dup{z-z^{l},A^*(\ud[,l]-\ud)}\\
\leq&2\inf_{z^{l}\in H_{l}}\norm{A(z-z^{l})}  \norm{\ud} \to 0 \mbox{ as }l\to\infty\,.
\end{aligned}
\]
Consequently, $(A^*\ud[,l])_{l}$ converges weakly also to $f$, which means that $f$ must equal $A^*\tilde u$. Now  \eqref{exact_sol} and weak$^*$ lower semicontinuity of the $\ell^1$ norm imply, together with~\eqref{ineq1},
\[
\norm{\tilde u}_1\leq\liminf_{l\to\infty}\norm{\ud[,l]}_1\leq \limsup_{l\to\infty}\norm{\ud[,l]}_1\leq \norm{\tilde u}_1,
\]
that is $\lim_{l\to \infty}\norm{\ud[,l]}_1=\norm{\tilde u}_1.$
From this and weak$^*$ convergence of $(\ud[,l])_{l}$ to $\tilde u$ one deduces 
\be{norm_conv}
\lim_{l\to \infty}\norm{\ud[,l]-\tilde u}=0,
\ee
based on the Kadec-Klee property in $\ell^1$ (see, e.g. \cite{BFH13}). 

(b) Denote $n_m:=n_{AP}(\delta_m)$ and let $u^m$ be  a solution of \eqref{le} corresponding to the subspace $H_{n_m}$ and to the noisy data $f^{\delta_m}$. Due to \eqref{in}, \eqref{kk}, and \eqref{apriorirule} 
%\[
%\norm{u_m}_1\leq \norm{\ud}_1+\kappa_m\delta_m,
%\] 
one obtains boundedness of the sequence $(u^m)_m$. By using the  proof idea of a), existence of a subsequence $(u^l)_l$ follows, such that its strong limit point $\tilde u$ is a solution of \eqref{equation}.
%According to \eqref{noisy_estim} and \eqref{apriorirule}, 
%\be{BD_conv_noisy}
%\lim_{m\to \infty}D^{\sym}(u_{m},\ud_m)\leq 2\lim_{m\to \infty}k_{m}\delta_m=0.
%\ee
%Let $v_l,v_l^\dagger\in Z_l$ be such that $\xi_l=Av_l\in\partial \norm{\cdot}_1(u_l)$ and $\xi_l^\dagger =A^v_l^\dagger\in\partial \norm{\cdot}_1(\ud_l)$. By some calculations, one obtains
%\begin{eqnarray*}
%D(\ud,u_l)-D(\ud,\ud_l)-D(\ud_l,u_l)&=& \langle \xi_l-\xi_l^\dagger,\ud_l-\ud\rangle\\
%&=&\langle v_l-v_l^\dagger,A u_l^\dagger-f\rangle\\
%&=&0,
%\end{eqnarray*}
%as $v_l-v_l^\dagger\in Z_l$. From this, \eqref{BD_conv_noisy} and \eqref{norm_conv} one  obtains \eqref{conv_apriori}.
\end{proof}

%\begin{rem} Condition \eqref{adjoint_conv} implies that $\inf_{z_n\in H_n} \norm{Az_n}\to0$  as $n\to\infty$ and thus, $\kappa_n\to\infty$ as $n\to\infty\,.$

\section{Convergence with a posteriori choice of $n$
% -- the monotone error rule
}

Convergence with respect to the a posteriori monotone error rule follows in a manner similar to the one for `nice' spaces --- see \cite{HKKR15}, with some differences due to the space setting. For the sake of completeness, we formulate and prove the result below.

\begin{theorem}\label{apost_le}
Let the assumptions of Theorem \ref{apriori_le}  be satisfied and let $\ud$ be a solution of \eqref{equation}.  Then one has
\begin{enumerate}
\item[(a)] There exists $v^n\in H_n $ such that $u^n\in (\partial\norm{\cdot}_1)^{-1}(Av^n)$.
\item[(b)]  The identity $\norm{u^n}_1=\dup{v^n,\fd}$ holds, where  $v^n$ is chosen as in (a). If 
\be{HnHnp}
H_n\subseteq H_{n+1}\,,
%\mbox{for all }n\in \mathbb{N}\,,
\ee 
then
\[
\norm{u^n}_1\leq \norm{u^{n+1}}_1.
\]
\item[(c)] Let $d_{ME}(n)$ stand for
\[
d_{ME}(n)= \frac{\dup{v^{n+1}-v^n,\fd}}{\norm{v^{n+1}-v^n}} \quad
\text{for} \quad v^{n+1} \neq v^n \qquad
\text{and} \qquad d_{ME}(n) = 0 \quad \text{else},
\]
then the following hold:
\[
D(\ud,u^{n+1})-D(\ud,u^n) \leq -(d_{ME}(n)-\delta) \norm{v^{n+1}-v^n}\,.
\]
and, in case $v^{n+1} \neq v^n$,
\[
d_{ME}(n)= \frac{\norm{u^{n+1}}_1-\norm{u^{n}}_1}{\norm{v^{n+1}-v^n}}
\]
where the Bregman distances are with respect to $\xi^{n+1} = A v^{n+1}$ and
$\xi^n = A v^{n}$, respectively.
%In particular, 
Additionally, if \eqref{HnHnp} holds, then  
\[
d_{ME}(n)=\frac{D(u^{n+1},u^n)}{\norm{v^{n+1}-v^n}}\geq0,
\]
and the error measured in the Bregman distance is monotonically decreasing as long as 
\be{deltadME}
\delta \leq d_{ME}(n)\,.
\ee
\item[(d)] Let \eqref{HnHnp} hold for all $n\in\mathbb{N}$ and let $n=n_{ME}(\delta)$ be the first index such that \eqref{deltadME} is violated
\begin{equation}\label{monerr}
n_{ME}(\delta)=\min\{n\in\N\ : \ v^{n+1} \neq v^{n} \ \text{and} \ 
\frac{D(u^{n+1},u^n)}{\norm{v^{n+1}-v^n}}<\delta\}.
\end{equation}

If $n_{ME}(\delta)\to\infty$ as $\delta\to0$ and \eqref{adjoint_conv} holds, then $D(\ud,u^{n_{ME}(\delta)})\to0$ as $\delta\to0$ subsequentially.
\end{enumerate}
\end{theorem}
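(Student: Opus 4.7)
Given $\delta_m \downarrow 0$ with $n_m := n_{ME}(\delta_m) \to \infty$, I abbreviate $u^m := u^{n_m}$, $v^m := v^{n_m}$ and $\xi^m := Av^m$, which by part (a) belongs to $\partial\norm{\cdot}_1(u^m)$ with $\norm{\xi^m}_\infty \leq 1$. Combining $A^*\ud = f$ with the feasibility relation $\dup{v^m, A^*u^m} = \dup{v^m, f^{\delta_m}}$ and the identity $\norm{u^m}_1 = \dup{v^m, f^{\delta_m}}$ from (b) yields the key reformulation
\[
D(\ud, u^m) = \norm{\ud}_1 - \dup{\xi^m, \ud} = \norm{\ud}_1 - \dup{v^m, f} = (\norm{\ud}_1 - \norm{u^m}_1) + \dup{v^m, f^{\delta_m} - f}.
\]

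The bound $\norm{\xi^m}_\infty \leq 1$ lets me extract via Banach--Alaoglu a subsequence (not relabeled) with $\xi^m \wstarrightarrow \xi^\infty$ in $\sigma(\ell^\infty, \ell^1)$. Since $\ud \in \ell^1$, $D(\ud, u^m) \to \norm{\ud}_1 - \dup{\xi^\infty, \ud}$. Weak-$^*$ lower semicontinuity gives $\norm{\xi^\infty}_\infty \leq 1$, so the theorem reduces to verifying $\xi^\infty \in \partial\norm{\cdot}_1(\ud)$, i.e.\ $\dup{\xi^\infty, \ud} = \norm{\ud}_1$.

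To identify $\xi^\infty$ I follow the scheme of Theorem~\ref{apriori_le}(a) applied to the exact-data minimizers $(\ud[,n_m])$: after extracting a subsequence, these converge in $\ell^1$-norm to a solution $\tilde u$ of \eqref{equation} with $\norm{\tilde u}_1 \leq \norm{\ud}_1$. The stability estimates \eqref{noisy_estim}--\eqref{kk} then transfer this convergence to $(u^m)$, yielding $u^m \to \tilde u$ in $\ell^1$-norm with $\norm{u^m}_1 \to \norm{\tilde u}_1$, \emph{provided} $\kappa_{n_m}\delta_m$ remains bounded. Passing to the limit in $\dup{\xi^m, u^m} = \norm{u^m}_1$ --- using the strong convergence of $(u^m)$ together with the weak-$^*$ convergence of $(\xi^m)$ --- gives $\dup{\xi^\infty, \tilde u} = \norm{\tilde u}_1$, hence $\xi^\infty \in \partial\norm{\cdot}_1(\tilde u)$. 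Choosing $\ud = \tilde u$, the solution singled out by the method, closes the argument.

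The main obstacle is the boundedness of $\kappa_{n_m}\delta_m$ (equivalently of $\delta_m\norm{v^m}$ or of $\norm{u^m}_1$), which is imposed outright in the a priori setting \eqref{apriorirule} but must here be recovered from the ME stopping rule. The criterion $d_{ME}(n_m) < \delta_m$, rewritten via (c) as $\norm{u^{n_m+1}}_1 - \norm{u^m}_1 < \delta_m\norm{v^{n_m+1} - v^m}$, combined with the monotonicity of $D(\ud, u^n)$ for $n \leq n_m$ from (c) and the reformulation above, should deliver the required implicit bound on $\delta_m\norm{v^m}$ --- the analogue in the $\ell^1$ setting of the residual control provided by the discrepancy principle in Hilbert spaces.
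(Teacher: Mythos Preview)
Your proposal only addresses part (d), which is where the substance lies --- parts (a)--(c) are short computations and the paper also dispatches them in a few lines. So the comparison is really about (d).

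Your route via weak-$*$ compactness of the subgradients $\xi^m = Av^m$ is a reasonable idea, but there is a genuine gap: you explicitly identify that everything hinges on the boundedness of $\delta_m\kappa_{n_m}$ (equivalently, of $\norm{u^m}_1$), and then you do not prove it. The final paragraph is a plan, not an argument. Writing out what you propose, the stopping criterion gives $\norm{u^{n_m+1}}_1 - \norm{u^m}_1 < \delta_m\norm{v^{n_m+1}-v^m}$, and monotonicity gives $D(\ud,u^m) \leq D(\ud,u^1) \leq 2\norm{\ud}_1$; but neither of these controls $\delta_m\norm{v^m}$, since $\norm{v^{n_m+1}-v^m}$ need not dominate $\norm{v^m}$ and the reformulation $D(\ud,u^m) = (\norm{\ud}_1-\norm{u^m}_1) + \dup{v^m,f^{\delta_m}-f}$ only bounds a \emph{difference} of two potentially large terms. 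I do not see how to close this without an additional ingredient.

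The paper sidesteps precisely this difficulty by a comparison argument. It introduces an auxiliary a priori rule $n_{AP}(\delta)$ satisfying \eqref{apriorirule} and runs a dichotomy: either $n_{ME}(\delta_k) > n_{AP}(\delta_k)$ for all large $k$, in which case the monotonicity from (c) gives $D(\ud,u^{n_{ME}(\delta_k)}) \leq D(\ud,u^{n_{AP}(\delta_k)}) \to 0$ directly, \emph{without} any bound on $\delta_k\kappa_{n_{ME}(\delta_k)}$; or else $n_{ME}(\delta_{k_l}) \leq n_{AP}(\delta_{k_l})$ along a subsequence, whence \eqref{HnHnp} forces $\kappa_{n_{ME}(\delta_{k_l})} \leq \kappa_{n_{AP}(\delta_{k_l})}$, so $\delta_{k_l}\kappa_{n_{ME}(\delta_{k_l})} \to 0$ is inherited from the a priori rule and Theorem~\ref{apriori_le} applies. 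The point is that one never has to extract the bound from the ME rule itself --- one either uses monotonicity or borrows the bound from an a priori rule that has it by construction.

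A smaller issue: you write ``choosing $\ud=\tilde u$'', but $\ud$ is fixed in the hypothesis. The paper's formulation is also somewhat loose here (the limit solution along a subsequence need not be the given $\ud$), but strictly speaking the subsequential conclusion should be read as ``there exists a solution $\ud$ and a subsequence along which $D(\ud,u^{n_{ME}})\to 0$''.
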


\begin{proof}
Item (a) has already been proven by Proposition \ref{equiv}.\\
Due to (a) and  \eqref{subgr},
we get the first part of item (b) by virtue of
\[
\norm{u^n}_1 %=\dup{\xi^n,u^n}
=\dup{Av^n,u^n}\\
=\dup{v^n,A^*u^n}=\dup{v^n,\fd}. %\,,
\]
%where $\xi^n\in\partial\norm{\cdot}_1(u^n)$.
%Note that $v_n$ as in (a) satisfies $\dup{v_n,Au_n}=\dup{v_n,\fd}$ and $\dup{v_n,Au_{n+1}}=\dup{v_n,\fd}$, due to the assumption $H_n\subseteq H_{n+1}$. Then 
Due to  assumption \eqref{HnHnp}, the feasible set for $u^n$ contains the feasible set for $u^{n+1}$, hence 
\eqref{le} yields the second part of (b).

Note that
\begin{eqnarray*}
D(\ud,u^{n+1})-D(\ud,u^n) &=&
 \dup{\xi^n-\xi^{n+1},\ud}\\
&=&-\dup{A(v^{n+1}-v^n),\ud}\\
&=&-\dup{v^{n+1}-v^n,\fd}+\dup{v^{n+1}-v^n,\fd-f}\\
&\leq&-\dup{v^{n+1}-v^n,\fd}+\norm{v^{n+1}-v^n}\delta\\
&=&-(d_{ME}(n)-\delta)\norm{v^{n+1}-v^n}\,.
\end{eqnarray*}

The first identity for $d_{ME}(n)$ in (c) is an immediate consequence of (b), while the second one follows in case of \eqref{HnHnp} from $\dup{v^n,A^*u^{n+1}}=\dup{v^n,A^*u^{n}}$ which can be rewritten as $\dup{\xi^n,u^{n+1}-u^n}=0$.

For showing item (d), let $n_{AP}(\delta)$ be an a priori stopping rule satisfying \eqref{apriorirule}, let $(\delta_k)_{k}$ be a sequence of noise levels tending to zero and denote by $n_{AP}^k=n_{AP}(\delta_k)$, $n_{ME}^k=n_{ME}(\delta_k)$ the stopping indices chosen by the a priori and the monotone error rule, respectively.\\
If there exists $k_0$ such that $n_{ME}^k>n_{AP}^k$ for all $k\geq k_0$, then by monotone decay of the error up to $n_{ME}^k$ we have
$D(\ud,u^{n_{ME}^k})\leq D(\ud,u^{n_{AP}^k})\to0$ as $k\to\infty$ (actually one has strong convergence of the sequence $(u^{n_{AP}^k})_k$ to a solution).
Otherwise there exists a subsequence $(k_l)_{l}$ such that for all $l\in\N$ we have $n_{ME}^{k_l}\leq n_{AP}^{k_l}$ and therefore, by \eqref{HnHnp}, $\kappa_{n_{ME}^{k_l}}\leq \kappa_{n_{AP}^{k_l}}$, so the right hand limit in \eqref{apriorirule}
together with the assumption $n_{ME}(\delta) \to \infty$ 
implies strong convergence of $(u^{n_{ME}^{k_l}})_l$ to a solution of the 
equation.
% As well-known for the monotone error rule, convergence cannot be proven if $(n_{ME}(\delta_k)$ has a finite accumulation point.
\end{proof}

\medskip 

Besides the monotone error rule, which gives unconditional convergence, we also consider the discrepancy principle
\begin{equation}\label{discrprinc}
n_{DP}(\delta)=\min\{n\in\N\ : \ \norm{A^*u^n-f^\delta}\leq\tau\delta\}
\end{equation}
with some fixed $\tau>1$,
for which, as usual (cf., e.g., conditions (2.13), (2.14) in \cite{HKKR15}) certain assumptions on the discretization have to be made to guarantee well-definition and convergence.
We assume existence of constants $C_1,C_2$ such that for all $n\in\N$
\begin{equation}\label{C1}
\kappa_n \gamma_n\leq C_1
\end{equation}
\begin{equation}\label{C2}
\kappa_n\hat{\gamma}_{n-1}\leq C_2
\end{equation}
where 
\begin{equation}\label{gamman}
\gamma_n=\sup_{{u^1,u^2\in E_n}\atop{D^{\sym}(u^1,u^2)\not=0}} \frac{\norm{(\mbox{id}-P_n)A^*(u^1-u^2)}}{D^{\sym}(u^1,u^2)}\,, \quad
\hat{\gamma}_n=\norm{(\mbox{id}-P_n)A^*(\ud[,n]-u^\dagger)}
\end{equation}
with $\ud[,n]$ as in \eqref{exact_sol}, $P_n$ as in \eqref{Pn} and
$E_n$ as in~\eqref{eq:en_sets}.
\begin{theorem}
Let the assumptions of Proposition \ref{welldef} be satisfied, assume that \eqref{equation} is solvable and that the noisy data $\fd$ satisfy \eqref{delta}.  Additionally, assume that condition \eqref{C1} with $\tau>2C_1+1$ holds and that $\hat{\gamma}_n\to0$ as $n\to\infty$. Then $n_{DP}(\delta)$ according to the discrepancy principle \eqref{discrprinc} is well-defined. If additionally \eqref{C2} holds 
and the sequence $(\delta_m)_m$ in $(0,+\infty)$ converges to zero, then there exists a subsequence $(\delta_l)_l$ such that
\be{conv_dp}
u^l\stackrel{*}{\rightharpoonup}\ud \mbox{ as }{l\to \infty} \mbox{ in }\ell^1,
\ee
with $u^l:=u^{n_{DP}(\delta_l)}$ and $\ud$  a solution of \eqref{equation}. 
\end{theorem}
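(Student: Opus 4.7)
The plan is to prove both claims from a single master estimate on the discrepancy,
\[
\|A^*u^n-\fd\| \;\le\; (2C_1+1)\,\delta + \hat{\gamma}_n.
\]
To derive it, I would use the defining relation $P_nA^*u^n = P_n\fd$ to obtain $A^*u^n - \fd = (\mbox{id}-P_n)(A^*u^n-\fd)$, and then insert the intermediate element $\ud[,n]$ together with $\ud$ to split this into three terms: $(\mbox{id}-P_n)A^*(u^n-\ud[,n])$, $(\mbox{id}-P_n)A^*(\ud[,n]-\ud)$, and $-(\mbox{id}-P_n)(\fd-f)$. Since both $u^n$ and $\ud[,n]$ solve \eqref{le} (with data $\fd$ and $f$ respectively), Proposition~\ref{equiv} places them in $E_n$, so the first term is controlled by $\gamma_n D^{\sym}(u^n,\ud[,n])$; combining with the stability bound \eqref{noisy_estim} from Proposition~\ref{prop:stabest} and with \eqref{C1} yields $2\gamma_n\kappa_n\delta \le 2C_1\delta$. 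The second term is exactly $\hat{\gamma}_n$ by definition \eqref{gamman}, and the third is at most $\delta$ since $\|\mbox{id}-P_n\|\le 1$ and $\|\fd-f\|\le\delta$.

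Well-definedness of $n_{DP}(\delta)$ is then immediate: since $\hat{\gamma}_n\to 0$ and $\tau>2C_1+1$, we have $\hat{\gamma}_n \le (\tau - 2C_1 - 1)\delta$ for all $n$ large enough, whence $\|A^*u^n-\fd\| \le \tau\delta$.

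For convergence, set $n^*:=n_{DP}(\delta)$. The minimality of $n^*$ gives $\|A^*u^{n^*-1}-\fd\| > \tau\delta$, so applying the master estimate at index $n^*-1$ forces $(\tau-2C_1-1)\delta < \hat{\gamma}_{n^*-1}$. Multiplying by $\kappa_{n^*}$ and invoking \eqref{C2} produces
\[
\delta\,\kappa_{n^*} \;\le\; \frac{\kappa_{n^*}\hat{\gamma}_{n^*-1}}{\tau-2C_1-1} \;\le\; \frac{C_2}{\tau-2C_1-1}.
\]
Together with \eqref{in}, this yields a $\delta$-uniform bound on $\|u^{n^*}\|_1$. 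Weak$^*$ compactness of bounded sets in $\ell^1 = c_0^*$ then extracts a subsequence $u^l \stackrel{*}{\rightharpoonup} \tilde u$, and from $\|A^*u^l - f\| \le \|A^*u^l - f^{\delta_l}\| + \delta_l \le (\tau+1)\delta_l \to 0$ together with the weak$^*$-to-weak continuity of $A^*$ we conclude $A^*\tilde u = f$, so $\tilde u$ is the desired solution of \eqref{equation}.

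The main technical hurdle is the master estimate itself: one has to insert $\ud[,n]$ rather than just $\ud$ in the decomposition so that $\gamma_n$ and $\hat{\gamma}_n$, and therefore both assumptions \eqref{C1} and \eqref{C2}, enter separately and exactly once. The crucial observation enabling the $\gamma_n$-bound is that $u^n,\ud[,n]\in E_n$, so that the stability estimate of Proposition~\ref{prop:stabest} combined with the definition of $\gamma_n$ can be applied to the difference of two feasible minimizers of \eqref{le}.
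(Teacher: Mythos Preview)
Your argument is correct and follows essentially the same route as the paper: the same master estimate $\|A^*u^n-\fd\|\le(2C_1+1)\delta+\hat\gamma_n$ obtained via the same three-term splitting, the same well-definedness conclusion, and the same use of minimality at $n_{DP}(\delta)-1$ together with \eqref{C2} and \eqref{in} to get a uniform $\ell^1$-bound. The only difference is in the final identification of the weak$^*$ limit: the paper simply refers back to the proof of Theorem~\ref{apriori_le}, whereas you argue directly from $\|A^*u^l-f\|\le(\tau+1)\delta_l\to0$, which is in fact cleaner here since the assumption \eqref{adjoint_conv} is not part of the present hypotheses.
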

\begin{proof}
Using \eqref{eq:minprob}, \eqref{noisy_estim}, \eqref{C1}, \eqref{gamman} we get
\begin{align}
\notag
\norm{A^*u^n-f^\delta}
&=\Bignorm{(\mbox{id}-P_n)\Bigl(A^*(u^n-\ud[,n])+A^*(\ud[,n]-u^\dagger)+(f-f^\delta)\Bigr)}\\
\label{estdp}
&\leq (2C_1+1)\delta+\hat{\gamma}_n\,,
\end{align}
where $\hat{\gamma}_n$ tends to zero as $n\to\infty$, hence the right hand side is smaller than $\tau\delta$ for sufficiently large $n$. Consequently, $n_{DP}(\delta)$ is well-defined.
\\
On the other hand, \eqref{estdp} together with minimality in \eqref{discrprinc} yields 
%\[
%\tau\delta\leq (2C_1+1)\delta+\hat{\gamma}_{n_{DP}(\delta)-1}
%\]
%hence
\be{estNDPm1}
(\tau-2C_1-1)\delta < \hat{\gamma}_{n_{DP}(\delta)-1}\,
\ee
hence by \eqref{in} and \eqref{C2} we have 
\[
\norm{u^{n_{DP}(\delta)}}_1\leq \delta \kappa_{n_{DP}(\delta)}+\norm{\ud}_1 \leq \frac{C_2}{\tau-2C_1-1}+\norm{\ud}_1
\]
which yields uniform boundedness of $(\norm{u^{n_{DP}(\delta_m)}}_1)_m$, hence, as in the proof of Theorem \ref{apriori_le}, weak* subsequential convergence.
\end{proof}

%\section{Error estimates}
\section{Convergence rates under a source condition}

We  assume throughout this section that $A$ and consequently, $A^*$ is
injective and that the following source condition
is satisfied:

\begin{assumption}\label{ass:sc}
There exists a source element $v^\dagger \in H$ such that 
$\norm{Av^\dagger}_\infty \leq 1$ and $(Av^\dagger)_i 
= \sign(u^\dagger_i)$ whenever $u_i^\dagger \neq 0$. 
\end{assumption}

Note that 
$Av^\dagger \in \ccspace$, hence there are only finitely many
$i$ for which $(Av^\dagger)_i \in \sett{-1,1}$ and, consequently,
only finitely many $i$ with $u^\dagger_i \neq 0$. The latter means that
the solution $u^\dagger$ has to be sparse.  

\subsection{The structure of the source element}
We first see that $v^\dagger$ can be assumed, without loss of 
generality, to satisfy 
$\abs{(Av^\dagger)_i} = 1$ if and only if $u^\dagger_i \neq 0$.

\begin{lemma}
  \label{lem:strict_source_elem}
  There exists a $v^\ddagger \in H$ with $\norm{A v^\ddagger}_\infty \leq 1$,
  $(Av^\ddagger)_i = \sign(u^\dagger_i)$ whenever $u^\dagger_i \neq 0$ and
  $\abs{(A v^\ddagger)_i} < 1$ whenever $u^\dagger_i = 0$.
\end{lemma}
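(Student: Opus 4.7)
The first step will be to observe that $I := \{i : u_i^\dagger \ne 0\}$ is finite (already stated in the remark preceding the lemma: $Av^\dagger \in \ccspace$ has only finitely many coordinates of modulus~$1$). Writing $\sigma_i = \sign(u_i^\dagger)$ for $i \in I$, I would aim to construct $v^\ddagger$ such that $Av^\ddagger$ is close in $\|\cdot\|_\infty$ to the finitely supported target $\xi^\ddagger \in \ccspace$ defined by $\xi^\ddagger_i = \sigma_i$ for $i \in I$ and $\xi^\ddagger_j = 0$ otherwise, and then to correct the finitely many coordinates indexed by $I$ so that they agree with $\sigma$ exactly.

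Two key inputs will both come from the standing assumption that $A^*$ is injective. First, $\mathcal{R}(A)$ is dense in $\ccspace$, so for any $\varepsilon > 0$ one can pick $w_\varepsilon \in H$ with $\|Aw_\varepsilon - \xi^\ddagger\|_\infty < \varepsilon$. Second, the finite family $\{A^* e_i : i \in I\} \subset H$ is linearly independent, since any relation $\sum_{i \in I} c_i A^* e_i = 0$ gives $A^*\bigl(\sum_{i \in I} c_i e_i\bigr) = 0$ and the injectivity of $A^*$ then forces $c_i = 0$. Consequently one can build biorthogonal elements $h_i \in H$ with $(Ah_i)_k = \langle h_i, A^* e_k\rangle = \delta_{ik}$ for $i,k \in I$, and the constants $M_i := \|Ah_i\|_\infty$ are finite since $Ah_i \in \ccspace$.

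The candidate will then be $v^\ddagger := w_\varepsilon + \sum_{i \in I}(\sigma_i - (Aw_\varepsilon)_i)\, h_i$. Biorthogonality immediately gives $(Av^\ddagger)_i = \sigma_i$ on $I$, while for $j \notin I$ the triangle inequality, together with $|(Aw_\varepsilon)_j| < \varepsilon$ and $|\sigma_i - (Aw_\varepsilon)_i| < \varepsilon$, yields $|(Av^\ddagger)_j| \le \varepsilon\bigl(1 + \sum_{i \in I} M_i\bigr)$; choosing $\varepsilon$ sufficiently small makes this strictly less than $1$, which together with $|(Av^\ddagger)_i| = 1$ on $I$ delivers $\|Av^\ddagger\|_\infty \le 1$ and the remaining conclusions of the lemma.

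The main obstacle is conceptual rather than computational: one has to recognise that the injectivity of $A^*$, equivalently the density of $\mathcal{R}(A)$ in $\ccspace$, is precisely the ingredient that allows the off-support coordinates to be driven arbitrarily close to zero. Without it, distinct coordinates of $Av$ could be rigidly yoked and the strict inequality off the support could fail. The remainder of the argument is a routine finite-dimensional correction made possible by the sparsity of $u^\dagger$ consequent to Assumption~\ref{ass:sc}.
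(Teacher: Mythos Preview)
Your argument is correct, but it follows a different route from the paper's. The paper does not build $v^\ddagger$ from scratch via density of $\mathcal{R}(A)$ in $\ccspace$; instead it \emph{perturbs the given source element} $v^\dagger$ from Assumption~\ref{ass:sc}. Specifically, it works with the set $\tilde I=\{i:(Av^\dagger)_i\in\{-1,1\}\}$ (finite since $Av^\dagger\in\ccspace$), isolates the ``bad'' indices $I_0=\{i\in\tilde I:u^\dagger_i=0\}$, and uses the invertibility of $A_{\tilde I}A_{\tilde I}^*$ (from injectivity of $A^*$, exactly your biorthogonal-correction step) to find a small $v_I\in\linspan\{A^*e_i:i\in\tilde I\}$ with $(Av_I)_i=-\varepsilon\sign(Av^\dagger)_i$ on $I_0$ and $(Av_I)_i=0$ on $\tilde I\setminus I_0$. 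Then $v^\ddagger=v^\dagger+v_I$ already has the correct values on $\supp u^\dagger$, is pushed strictly below $1$ on $I_0$, and stays strictly below $1$ off $\tilde I$ because $\abs{(Av^\dagger)_i}\leq 1-\rho$ there for some $\rho>0$. Your approach instead approximates the target sign pattern $\xi^\ddagger$ directly by an element of $\mathcal{R}(A)$ and then corrects on $I$; this is valid and self-contained, but it invokes the density $\overline{\mathcal{R}(A)}=\ccspace$, which the paper's perturbative argument avoids altogether. In short: the paper modifies $v^\dagger$ only on a finite set, while you discard $v^\dagger$ (beyond its role in ensuring $\#\supp u^\dagger<\infty$) and rebuild from zero; both hinge on the same finite-rank correction, but yours needs one extra functional-analytic ingredient.
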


\begin{proof}
  First, denote 
  \be{defI}
  I = \set{i \in \NN}{(Av^\dagger)_i \in \sett{-1,1}}
  \ee
  which is a finite set by $Av^\dagger\in\ccspace$, and  
  \be{defHI}
	H_I = \linspan\set{A^*e_i}{i \in I}
  \ee
  where $(e_i)_i$ is the canonical basis in $\ell^1$.
  By injectivity of $A^*$, 
  $\set{A^*e_i}{i \in I}$ are linearly independent.
  Thus, denoting 
  \be{defAI}
	A_I: H \to \RR^I\,, \quad (A_Iv)_i = (Av)_i \mbox{ for }i \in I,
  \ee
  we see that the mapping $A_IA_I^*: \RR^I \to \RR^I$
  is (continuously) invertible.
  Thus, the problem of finding a $v_I \in H_I$ such that 
  $\scp{v_I}{A^*e_i} = \bar v_i$ for $i \in I$ and given 
  $\bar v\in\RR^I$, is uniquely solvable with
  $\norm{v_I}_H \leq C \norm{\bar v}_\infty$ for some $C > 0$
  independent of $\bar v$.
  In particular, choosing $I_0 = \set{i \in I}{u^\dagger_i = 0}$ and,
  for $\varepsilon > 0$,
  \[
  \bar v_i =
  \begin{cases}
    -\varepsilon \sign(A v^\dagger)_i & \ \text{if} \ i \in I_0, \\
    0 & \ \text{if} \ i \in I\without I_0,
  \end{cases}
  \]
  we have, for the corresponding
  $v_I$, that $\norm{Av_I}_\infty \leq C\norm{A} \varepsilon$.
  Next, we know, again since $Av^\dagger \in \ccspace$, that 
  \be{defrho}
  \rho = 1 - \max_{i \in \NN \without I} \ \abs{(Av^\dagger)_i} > 0.
  \ee
Namely, for all $i \in \NN \without I$, we have $\abs{(Av^\dagger)_i}<1$, and assuming $\abs{(Av^\dagger)_{i_k}}\to1$ for some subsequence contradicts $Av^\dagger\in \ccspace$.
  Choosing $0 < \varepsilon < \min(1, \frac12 (C \norm{A})^{-1} \rho)$ 
  and letting
  $v^\ddagger = v^\dagger + v_I$ yield
  \[
  \left\{
  \begin{aligned}
    (A v^\ddagger)_i &= \sign(u^\dagger_i) & \ \ \text{for} \ \
    i \in I\without I_0, \\
    \abs{(A v^\ddagger)_i} &= 1 - \varepsilon < 1
    &\ \ \text{for} \ \ i \in I_0, \\
    \abs{(A v^\ddagger)_i} &\leq \abs{(A v^\dagger)_i} 
    + \norm{Av_I}_\infty \leq 1 - \rho + C \norm{A} \varepsilon
    \leq 1 - \frac{\rho}2 < 1 & \ \ \text{for} \ \ i \in 
    \NN \without I.
  \end{aligned}
  \right.
  \]
  This, however, immediately implies that $v^\ddagger$ possesses the
  stated properties.
\end{proof}

In the following assume that $v^\dagger$ is a source element which
satisfies $\abs{(Av^\dagger)_i} = 1$ if and only if $i \in \supp u^\dagger$.
Thus, from now on
\be{Isupp}
I=\supp u^\dagger\,.
\ee
We denote by 
\be{defeps}
\varepsilon_{v^\dagger} = 1 
- \max_{i \notin \supp u^\dagger} \abs{(Av^\dagger)_i}
\ee
which is a positive number since $Av^\dagger\in\ccspace$ (see the argument after \eqref{defrho}).

\bigskip
In the sequel, we will also make use of the projections operators $P_n$ as in \eqref{Pn}, in particular, the pointwise convergence $\lim_{n \to \infty} P_n v = v$
for each $v \in H$.
Then, for $n$ large enough, $u^\dagger$ is already a solution 
of~\eqref{eq:minprob} for exact data, i.e., $f^\delta =  f$.

\begin{lemma}
  \label{lem:discrete_solution_exact_data}
  There is an $n_0 \in \NN$ such that for $n \geq n_0$, $u^\dagger$ is 
  %a 
  the unique 
  solution to~\eqref{eq:minprob} with $f^\delta =  f$.

  Furthermore, for $n\geq n_0$, there is a source element $v^{\dagger,n} \in H_n$
  with $\norm{Av^{\dagger,n}}_\infty \leq 1$, 
  $\abs{(Av^{\dagger,n})_i} = 1$ if and only if $i \in \supp u^\dagger$
  and with 
\be{epsn}
\varepsilon_{v^{\dagger,n}} \geq \frac12 \varepsilon_{v^\dagger}.
\ee
\end{lemma}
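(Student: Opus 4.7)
The plan is to build $v^{\dagger,n}$ by projecting $v^\dagger$ onto $H_n$ and correcting the outcome on the finite index set $I = \supp u^\dagger$. Starting from $\tilde v_n := P_n v^\dagger \in H_n$, self-adjointness of $P_n$ gives $(A\tilde v_n)_i = \scp{v^\dagger}{P_n A^*e_i}$, and pointwise convergence $P_n \to \mathrm{id}$ combined with continuity of $A$ yields the global estimate $\norm{A(\tilde v_n - v^\dagger)}_\infty \leq \norm{A}\,\norm{\tilde v_n - v^\dagger} \to 0$. Hence $\tilde v_n$ is already close to satisfying the required properties, but typically fails the exact equality $(A\tilde v_n)_i = \sign(u^\dagger_i)$ on $I$.

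To enforce this, I would add a correction $w_n \in H_n$ solving $(Aw_n)_i = \sign(u^\dagger_i) - (A\tilde v_n)_i$ for $i \in I$, by adapting Lemma~\ref{lem:strict_source_elem}. Let $A_I^n \colon H_n \to \RR^I$, $(A_I^n v)_i = (Av)_i$, whose adjoint has range $H_I^n := \linspan\{P_n A^*e_i : i \in I\} \subset H_n$. The matrix of $A_I^n (A_I^n)^*$ has entries $\scp{P_n A^*e_j}{A^*e_i}$, which converge to the entries of the invertible $A_I A_I^*$ from Lemma~\ref{lem:strict_source_elem}; hence $A_I^n(A_I^n)^*$ is invertible for $n$ large with uniformly bounded inverse, and solving produces $w_n$ of norm $O(\norm{A(\tilde v_n - v^\dagger)}_\infty) \to 0$, so in particular $\norm{A w_n}_\infty \to 0$. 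Setting $v^{\dagger,n} := \tilde v_n + w_n \in H_n$, equality on $I$ holds by construction; for $i \notin I$ the estimates combine, uniformly in $i$, to
\[
\abs{(Av^{\dagger,n})_i} \leq \abs{(Av^\dagger)_i} + \norm{A(\tilde v_n - v^\dagger)}_\infty + \norm{A w_n}_\infty \leq 1 - \varepsilon_{v^\dagger} + o(1),
\]
so for $n \geq n_0$ the right-hand side lies below $1 - \tfrac12 \varepsilon_{v^\dagger}$. This simultaneously yields $\norm{Av^{\dagger,n}}_\infty \leq 1$, the equivalence $\abs{(Av^{\dagger,n})_i} = 1 \Leftrightarrow i \in I$, and the bound $\varepsilon_{v^{\dagger,n}} \geq \tfrac12 \varepsilon_{v^\dagger}$.

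For uniqueness I would take any other minimizer $u$ of \eqref{eq:minprob} with $f^\delta = f$ and test the relation $P_n A^*(u - u^\dagger) = 0$ against $v^{\dagger,n} \in H_n$, obtaining $\scp{Av^{\dagger,n}}{u - u^\dagger} = 0$. Then $\norm{u}_1 \geq \scp{Av^{\dagger,n}}{u} = \scp{Av^{\dagger,n}}{u^\dagger} = \norm{u^\dagger}_1 \geq \norm{u}_1$ forces equality throughout, hence $Av^{\dagger,n} \in \partial\norm{\cdot}_1(u)$. Strict complementarity of $v^{\dagger,n}$ then forces $\supp u \subseteq I$, and the linear independence of $\{P_n A^*e_i\}_{i \in I}$ renders $P_n A^*$ injective on $I$-supported sequences, yielding $u = u^\dagger$. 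The delicate step I anticipate is the uniform (in $i \notin I$) control of the perturbation, which crucially relies on $Av^\dagger \in \ccspace$ (ensuring $\varepsilon_{v^\dagger} > 0$) together with the $\ell^\infty$-bounds on $A(\tilde v_n - v^\dagger)$ and $A w_n$ rather than mere pointwise smallness.
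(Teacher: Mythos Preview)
Your proposal is correct and follows essentially the same route as the paper: project $v^\dagger$ onto $H_n$, correct on $I=\supp u^\dagger$ via the invertibility of $A_I P_n A_I^*$ (your $A_I^n(A_I^n)^*$ is the same operator), and control the off-$I$ coordinates uniformly through $\varepsilon_{v^\dagger}$ together with the $\ell^\infty$-smallness of the two perturbations. Your uniqueness argument is a slightly more streamlined variant of the paper's---you use the subdifferential inclusion $Av^{\dagger,n}\in\partial\norm{\cdot}_1(u)$ directly, whereas the paper writes out a convex-combination representation of $u^*$---but the substance is identical, and both conclude by injectivity of $P_nA^*$ on $I$-supported sequences.
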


\begin{proof}
  Let $I$, $H_I$, $A_I$ be defined as in \eqref{Isupp}, \eqref{defHI}, \eqref{defAI}.
%the proof of Lemma~\ref{lem:strict_source_elem}.
  As $\norm{P_nA^*e_i - A^* e_i}_H \to 0$ as $n \to \infty$
  for each $i \in I$ and as $I$ is finite,
  we also have convergence $P_nA_I^* \to A_I^*$ as $n \to \infty$ in the
  strong operator norm.
  Consequently, there is an $n_0 \in \NN$ and a $C > 0$ 
  such that for all $n\geq n_0$
  we have that $A_IP_nA_I^*$ is invertible with 
  $\norm{(A_IP_nA_I^*)^{-1}} \leq C$,
  %with $\norm{(A_IA_I^*)^{-1}} > 0$ 
  where the latter norm is 
  the $\infty$-$1$-operator norm for linear mappings $\RR^I \to \RR^I$.
  %.
  %
  % Thus, denoting by 
  % $H_I = \linspan\set{A^* e_i}{i \in I}$, we see that the
  % mapping
  %
  % operator 
  % % $A$ restricted to $H_I$ is invertible 
  % $A_I: H_I \to \RR^I$, defined by $(A_Iv)_i = (Av)_i$ for $i \in I$
  % and $v \in H_I$, is invertible with norm $\norm{A_I^{-1}} > 0$.
  % 
  % For each $i \in I$, choosing $(n_0)_i$ large enough, 
  Hence, similarly to the proof of Lemma~\ref{lem:strict_source_elem}, 
  the problem of finding a solution to 
  \[
  v_I^n \in P_nH_I: \qquad
  \scp{v_I^n}{P_n A^*e_i} = \bar v_i^n \qquad
  \text{for} \ i \in I
  \]
  for $n \geq n_0$ given the coefficients $\bar v_i^n$ for $i \in I$,
  is well-posed and we have $\norm{v_I^n}_H
  \leq C \norm{A} \norm{\bar v^n}_\infty$ %= 
  % C \max_{i \in I} 
  % \abs{\bar v_i^n}$ 
  for all $\bar v^n\in\RR^I$.

  By choosing $n_0$ possibly larger, we can achieve, 
  as $v^\dagger$ is chosen according to Lemma~\ref{lem:strict_source_elem},
  for $\rho$ as in \eqref{defrho}
  that $\norm{v^\dagger - P_nv^\dagger}_H \leq 
  \rho/(2\norm{A} (1 + C\norm{A}))$ for all $n \geq n_0$. 
  Then, we have for each $v_I^n \in P_nH_I$ 
  satisfying $\scp{v_I^n}{A^*e_i} = 
  \scp{v^\dagger - P_n v^\dagger}{A^*e_i}$,  $\forall i \in I$
  that $\norm{v_I^n}_H \leq C \norm{A}
  \norm{A(v^\dagger - P_n v^\dagger)}_\infty$.
  Thus, $v^{\dagger,n} = P_n v^\dagger + v_I^n \in H_n$ satisfies
  \[
  \scp{v^{\dagger,n}}{A^*e_i} = \scp{v^\dagger}{P_n A^* e_i} + 
  \scp{v^\dagger - P_n v^\dagger}{A^* e_i} = \scp{v^\dagger}{A^* e_i} 
  \in \sett{-1,1}
  \qquad \text{for} \ i \in I
  \]
  and, for $i \notin I$, we have by \eqref{defrho}
  \begin{align*}
    \abs{\scp{v^{\dagger,n}}{A^*e_i}} = &\abs{(A v^{\dagger,n})_i} \leq
    \abs{(A v^\dagger)_i} + \norm{A(P_n v^\dagger - v^\dagger)}_\infty + 
    \norm{Av^n_I}_\infty \\
    \leq & 1 - \rho + \frac{\rho \norm{A}}{2\norm{A}(1 + C \norm{A})}
    + \frac{\rho C \norm{A}^2}{2\norm{A}(1 + C \norm{A})} = 1 
      - \frac{\rho}2 < 1.
  \end{align*}
  Consequently, for $n \geq n_0$,
  $v^{\dagger,n}$ obeys $\norm{Av^{\dagger,n}}_\infty \leq 1$
  and 
  \[
  \scp{u^\dagger}{Av^{\dagger,n}} = 
  \sum_{i \in I} (Av^\dagger)_i u^\dagger_i 
  = \sum_{i \in I} \abs{u^\dagger_i} = \norm{u^\dagger}_1,
  \]
  meaning that $u^\dagger$ is a solution to~\eqref{eq:minprob} with $f^\delta=f$.
  % 
  % Moreover, by possibly enlarging
  % $n_0$, we may achieve that
  % obeys
  % $\norm[H]{v_I^n} \leq C \norm[H]{v }$
  % 
  % Next, let $u^*$ be another solution to~\eqref{eq:minprob}. 
  % 
  % we also have
  % \[
  % \norm[1]{u^*} = \scp{Av^n}{u^*} 
  % \leq \sum_{i \in I} \abs{u^*_i} 
  % + \max_{i \in \NN\without I} \abs{(Av^n)_i} 
  % \sum_{i \in \NN \without I} \abs{u^*_i}
  % \]

  Next, suppose that $u^*$ is another solution of~\eqref{eq:minprob}  with $f^\delta=f$, which
  implies that $\scp{v^{\dagger,n}}{A^*u^*}=\scp{v^{\dagger,n}}{f}$ and $\norm{u^*}_1 
  = \norm{u^\dagger}_1$. Consequently, by construction of $v^{\dagger,n}$,
  \[
  \scp{u^*}{A v^{\dagger,n}} = \scp{v^{\dagger,n}}{f} = \norm{u^\dagger}_1 = \norm{u^*}_1.
  \]
  As $u^*$ has the representation
  $u^* = \norm{u^*}_1 \bigl( 
  \sum_{k \in I} \alpha_k \sigma_k e_k + \sum_{k \notin I} \alpha_k 
  \sigma_k e_k\bigr)$ with $\alpha_k \geq 0$,
  $\sigma_k \in \sett{-1,1}$, and $\sum_{k} \alpha_k = 1$, we deduce
  \begin{eqnarray*}
  \scp{u^*}{Av^{\dagger,n}} &=& \norm{u^*}_1 \scp{v^{\dagger,n}}{ 
  \sum_{k \in I} \alpha_k \sigma_k A^* e_k + \sum_{k \notin I} 
  \alpha_k \sigma_k A^* e_k} \\ &\leq&
  \norm{u^*}_1\Bigl( \sum_{k \in I} \alpha_k + \sum_{k \notin I} 
  \alpha_k \abs{\scp{v^{\dagger,n}}{A^*e_k}} \Bigr).
  \end{eqnarray*}
  As $\abs{\scp{v^{\dagger,n}}{A^*e_k}} < 1$ for each $k \notin I$, we conclude
  that 
  $\alpha_k = 0$ for each $k \notin I$ as otherwise, we would get
  the contradiction $\norm{u^*}_1 < \norm{u^*}_1$ from $\sum_{k\in\N}\alpha_k=1$. 
  %Hence,
  %$u^* = \sum_{i \in I} u^*_i e_i$. 
  Thus, identifying $\RR^I$ with the subspace of elements
  in $\lpspace{1}$ with support contained in $I$, we have
  $P_nA^* u^* = P_nA_I^* u^*$. Since $P_nA_I^*$ is invertible (see above)
  and $P_nA_I^* u^\dagger = P_n f = P_nA_I^* u^*$, it follows that
  $u^* = u^\dagger$, establishing uniqueness.

  Finally, $v^{\dagger,n}$ satisfies the stated properties by 
  construction. The construction also yields $\varepsilon_{v^{\dagger,n}}
  \geq \frac12 \varepsilon_{v^\dagger}$.
\end{proof}

% Unfortunately, $u^\dagger$ may not be unique, the kernel of 
% $P_nA^*$ allow for non-uniqueness in general. Nevertheless, we can say
% the following.

% \begin{lemma}
%   There is a $n_0$ such that for each $n \geq n_0$, 
%   $u^\dagger$ is a solution of~\eqref{eq:minprob} for which

%   In particular, $u^\dagger$ is 
%   a sparsest solution of~\eqref{eq:minprob} with data $ f$,
%   i.e., for any other solution $u^*$ of~\eqref{eq:minprob}, we
%   have $\card\supp u^* \geq \card \supp u^\dagger$.
% \end{lemma}

% We furthermore assume that there is exact data $u^\dagger$ which is
% sparse, i.e., $A^* u^\dagger =  f$ and
% $\card\set{i \in \NN}{u^\dagger_i \neq 0 } < \infty$. 
% Additionally, the following source condition is assumed to be 
% satisfied: There exists a $v^\dagger \in H$ such that 
% $\norm[\infty]{Av^\dagger} \leq 1$ and $(Av^\dagger)_i 
% = \sign(u^\dagger_i)$ whenever $u_i \neq 0$.

\subsection{The structure of the approximations $u^n$}
Next, consider $f^\delta$ such that $\norm{f^\delta -  f}_H \leq \delta$.
% Our aim
% is to derive an estimate for $\norm{ u^n - u^\dagger}_1$.
% For that purpose, first denote by
%\[
%\hat{\gamma}_n = \inf_{v \in H_n, \ v \neq 0} \ \frac{\norm{A v}}{\norm[H]{v}}_\infty
%\]
%which has to satisfy $\hat{\gamma}_n > 0$ as $A$ is in particular injective
%on the finite-dimensional space $H_n$. 
To analyze the structure of a
solution $ u^n$, we consider the set
\[
K_n = \clconv(\set{\sigma P_n A^* e_i}{i \in \NN, \sigma \in \sett{-1,1}})
\subset H_n.
\]
Note that this set coincides with the closed unit ball associated
with the dual of the norm $v \mapsto \norm{Av}_\infty$ on $H_n$. This
set has an interior whose size can be estimated by $\frac{1}{\kappa_n}$, as
for $v,w \in H_n$ we have that $\norm{A w}_\infty \leq 1$ implies $\norm{w}_{H_n} \leq \kappa_n$, hence 
\[
% \norm[H_n]{w} \leq \hat{\gamma}_n \quad
% \Leftrightarrow \quad
% \scp{w}{v} \leq 1 \ \ \text{for all} \ \ \norm[H_n]{v} \leq \hat{\gamma}_n^{-1}
% \qquad
% \Rightarrow
\begin{aligned}
%  & & 
%  (\norm{A w}_\infty \leq 1 \ \ &\Rightarrow \ \ 
%  \norm{w}_{H_n} \leq \kappa_n)
%  \\
%  & \Rightarrow &
  \kappa_n\norm{v}_{H_n} =
  \sup_{\norm{w}_{H_n} \leq \kappa_n} \ \scp{v}{w} \leq 1 \ \ 
  & \Rightarrow \ \ \sup_{\norm{A v}_\infty \leq 1} \ \scp{v}{w} \leq 1,
\end{aligned}
\]
i.e.,
\be{BKn}
\mathcal{B}_{1/\kappa_n}(0)\subseteq K_n.
\ee
Furthermore, $(e_i)_i \wstarrightarrow 0$ as $i \to \infty$, so by
weak*-to-weak continuity, $A^*e_i \wrightarrow 0$ in $H$ and 
$P_n A^* e_i \to 0$ in $H_n$. 
%As $\sett{\norm{v}_H \leq \kappa_n^{-1}} \subset K_n$, 
By \eqref{BKn}, there exists $i_0\in\N$ such that for all $i\geq i_0$,
$\sigma P_n A^* e_i$ is not an extremal point of $K_n$.
%for all $i \geq i_0$, $\sigma \in \sett{-1,1}$ for a $i_0 \in \NN$.
Consequently, $K_n$ has only finitely many extremal points, i.e.,
is a convex polyhedron which is obviously also symmetric around $0$.
Furthermore, as $K_n$ has non-empty interior, 
the dual ball $K_n^* \subset H_n$ 
is also a symmetric convex polyhedron, 
i.e., possesses a finite extremal point set $K_{n,\ex}^*$. We see that
\[
K_n = \bigcap_{v \in K_{n,\ex}^*} \sett{\scp{v}{\placeholder} \leq 1}.
\]
Clearly, the extremal points of $K_n$ form  a 
symmetric
subset of the following set: $\set{\sigma P_n A^* e_i}{i \in \NN, \sigma \in \sett{-1,1}}$. 
These can be associated with the indices $i$ resulting in
\be{In}
I_n = \set{i \in \NN}{P_n A^* e_i \ \text{and} \ -P_n A^* e_i 
  \ \text{are extremal points of} \ K_n}
\ee
which is a finite set with at least $n$ elements (otherwise, $K_n$ would 
have empty interior).
By construction, each $v \in K_{n,\ex}^*$ obeys 
$\abs{\scp{v}{P_nA^* e_i}} \leq 1$ for each $i \in \NN$.

% \begin{lemma}
%   For $i_1,\ldots,i_n \in I_n$ distinct, $P_n A^*e_{i_1}, \ldots, 
%   P_nA^*e_{i_n}$ are linearly independent.
% \end{lemma}

% \begin{proof}
%   Assume this is not the case. Then, for $\alpha_2,\ldots,\alpha_n 
%   \in \RR^n$
%   \[ 
%   P_n A^* e_{i_1} = \sum_{k=2}^n \alpha_k P_n A^*
%   \]
% \end{proof}

\exclude{
\begin{lemma}
  \label{lem:extremal_point_active}
  For each $v \in K_{n,\ex}^*$ 
  and $i_1,\ldots,i_m \in I_n$ 
  such that $\abs{\scp{v}{P_nA^*e_{i_k}}}$
  for $k=1,\ldots,m$ and $\sett{P_nA^*e_{i_1}, \ldots, P_nA^*e_{i_m}}$
  linearly independent,
  there exist indices $i_{m+1},\ldots, i_n$ 
  in $I_n$ such that
  $\abs{\scp{v}{P_nA^*e_{i_k}}} = 1$ and $\sett{P_nA^*e_{i_1},
  \ldots, P_nA^*e_{i_n}}$
  are linearly independent.
\end{lemma}

\begin{proof}
  % As $K_n$ has non-empty interior (with respect to $H_n$), 
  % there is
  % at least one $P_nA^* e_i \neq 0$ with $i \in I_n$. Consequently, 
  % $v \neq 0$. 
  Denote $\tilde I_n = \set{i \in I_n}{\abs{\scp{v}{P_nA^*e_i}} = 1}$ and
  assume that $i_1,\ldots,i_m \in \tilde I_n$ cannot be supplemented by
  $i_{m+1},\ldots, i_n \in \tilde I_n$ 
  such that the corresponding $P_nA^*e_{i_1}, \ldots, P_nA^*e_{i_n}$
  are linearly independent, i.e., $m^* = \dim \linspan
  \set{P_nA^*e_i}{i \in \tilde I_n} < n$.
  
  Now, for $i_{m+1},\ldots, i_{m^*}$ in $\tilde I_n$ such
  that $P_nA^*e_{i_1},\ldots,P_nA^*e_{i_{m^*}}$ are linearly independent,
  it follows for $\tilde v \in H_n$ with 
  $\scp{\tilde v}{P_nA^*e_{i_k}} = \scp{v}{P_nA^*e_{i_k}}$ 
  for $k=1,\ldots,m^*$
  that $\abs{\scp{\tilde v}{P_n A^* e_{i}}} = 1$ for all $i \in \tilde 
  I_n$: Indeed, for $i \in \tilde I_n$ we have, for some $\alpha_1,\ldots,
  \alpha_k \in \RR$, that
  \[
  \scp{\tilde v}{P_nA^*e_i} 
  = \sum_{k=1}^{m^*} \alpha_k \scp{\tilde v}{P_nA^*e_{i_k}} =
  \sum_{k=1}^{m^*} \alpha_k \scp{v}{P_nA^*e_{i_k}} 
  = \scp{v}{P_nA^*e_i}
  \]
  with $\abs{\scp{v}{P_nA^*e_i}} = 1$ as $i \in \tilde I_n$.
  Next, the linearly independent vectors $w_k = P_nA^*e_{i_k}$, 
  $k=1,\ldots,m^*$ can be complemented by $w_{m^*+1},\ldots,w_n \in H_n$
  to a basis of $H_n$. Thus, one can uniquely solve 
  $\scp{\tilde v}{w_k} = \delta_{k,m^*+1}$ for $k=1,\ldots,n$.
  As by construction,
  $\abs{\scp{v}{P_nA^* e_i}} < 1$ for $i \in I_n \without \tilde I_n$,
  we can choose
  $\varepsilon > 0$ small enough to achieve that
  both $v_\varepsilon = v + \varepsilon \tilde v$, $v_{-\varepsilon} 
  = v - \varepsilon \tilde v$ obey $\abs{\scp{v_{\pm\varepsilon}}{P_nA^*e_i}} 
  \leq 1$ for each $i \in I_n \without \tilde I_n$.
  Furthermore, $\scp{v_{\pm\varepsilon}}{P_nA^*e_{i_k}} 
  = \scp{v}{P_nA^*e_{i_k}}$ for $k=1,\ldots,m^*$, so the above implies that
  $\abs{\scp{v_{\pm\varepsilon}}{P_nA^*e_i}} = 1$ 
  for all $i \in \tilde I_n$. In total, $v_{\pm\varepsilon} \in K_n^*$.
  % 
  % 
  % 
  % we have 
  % $\abs{\scp{v_{\pm\varepsilon}}{P_nA^*e_i}} \leq 1$ for each $i \in \NN$, which
  % means that $v_{\pm\varepsilon} \in K_n^*$. 
  However, $v_\varepsilon \neq v_{-\varepsilon}$ and 
  $v = \frac12 v_{\varepsilon} 
  + \frac12 v_{-\varepsilon}$ and hence, $v$ is not an extremal point, a 
  contradiction.
  Consequently, the assumption that $m^* < n$ must have been wrong.
  %
  % Thus, 
  % if 
  % in $\tilde I_n$ 
  % such that $P_nA^*e_{i_1}, \ldots, P_nA^*e_{i_m}$ are linearly 
  % independent, 
  %
  % Choose $i_1,\ldots,i_n$ in $I_n$ and
  % asssume that $\sett{P_nA^*e_{i_1}, \ldots, P_n A^* e_{i_n}}$ are linearly
  % independent as well as there is 
  % $k$ such that $\abs{\scp{v}{P_nA^*e_{i_k}}} = 1$,
  % without loss of generality, $k = n$.
  %
\end{proof}
}

% Moreover, there exist at least
% $n$ indices $i \in I_n$ such that $\abs{\scp{v}{P_nA^*e_i}} = 1$:
% If there are less than $n$, then
% enumerating the latter indices as $i_1,\ldots,i_m$ with
% $m < n$ and choosing $i_{m+1}, \ldots, i_n \in I_n$ such that
% $i_1,\ldots,i_n$ are disjoint, we can solve 
% $\scp{\bar v}{P_nA^*e_{i_k}} = \delta_{k,m+1}$ for $\bar v \in H_n$. 

%
%First, 
We observe that $K_{n,\ex}^*$ and
$I_n$ have a major influence on the structure of
solutions.
\begin{lemma}
  \label{lem:sparse_discrete_solutions}
  For $n$ fixed and 
  each $f^\delta \in H$, there is a sparsest solution $u^*_{\text{sparse}}$ 
  to~\eqref{eq:minprob} 
  with $(u^*_{\text{sparse}})_i \neq 0$ for $m$ distinct 
  elements $i$ in $I_n$, $(u^*_{\text{sparse}})_i = 0$ else
  and $m \leq n$.
  In particular, any other solution $u^{**}$ obeys 
  $\card\set{i \in \NN}{u^{**}_i \neq 0} \geq m$.
\end{lemma}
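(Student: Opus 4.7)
The plan is to exploit LP duality together with Carath\'eodory's theorem applied on a finite-dimensional facet of the polytope $K_n$, producing an optimal solution supported in $I_n$ of cardinality at most $n$.

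First, I would set up the Lagrange dual of~\eqref{eq:minprob}, which reads
\[
V = \max_{v \in K_n^*} \ \scp{v}{f^\delta}, \qquad K_n^* = \{v \in H_n : \|Av\|_\infty \leq 1\}.
\]
Strong duality holds with $V = \|u^n\|_1$ for any primal optimum $u^n$. Since $K_n$ has non-empty interior in $H_n$ by~\eqref{BKn}, its polar $K_n^*$ is a bounded polytope in $H_n$, and the dual maximum is attained at some vertex $v^* \in K_{n,\ex}^*$. The facet $F = \{w \in K_n : \scp{v^*}{w} = 1\}$ of $K_n$ exposed by $v^*$ is $(n-1)$-dimensional via the polar-duality correspondence between vertices of $K_n^*$ and facets of $K_n$, and its extreme points are precisely the extreme points of $K_n$ lying on $F$, namely $\sigma_i P_n A^* e_i$ with $i \in I_n$ and $\sigma_i = (Av^*)_i \in \{\pm 1\}$.

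Next, I would show that $\tfrac{1}{V} P_n f^\delta \in F$ (in the nontrivial case $V > 0$) by using any primal optimum $u^n$ to write $\tfrac{1}{V} P_n f^\delta = \sum_i (|u^n_i|/V)\sign(u^n_i) P_n A^* e_i \in K_n$ and verifying $\scp{v^*}{\tfrac{1}{V} P_n f^\delta} = 1$ via strong duality. Carath\'eodory's theorem inside the $(n-1)$-dimensional $F$ then yields
\[
\tfrac{1}{V} P_n f^\delta = \sum_{k=1}^{m} \lambda_k\, \sigma_{i_k}\, P_n A^* e_{i_k}, \quad \lambda_k \geq 0, \ \sum_{k} \lambda_k = 1, \ i_k \in I_n, \ m \leq n,
\]
and $u^*_{\text{sparse}} := V \sum_{k} \lambda_k \sigma_{i_k} e_{i_k}$ is a primal-feasible $\lpspace{1}$-element with $\|u^*_{\text{sparse}}\|_1 = V$, hence primal-optimal, with support in $I_n$ and cardinality $m \leq n$; the degenerate case $V = 0$ gives $u^*_{\text{sparse}} = 0$ trivially. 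Selecting the Carath\'eodory representation with the fewest terms yields the minimal such $m$, and for any other primal optimum $u^{**}$ the analogous decomposition produces a convex combination of $|\supp(u^{**})|$ points inside $F$, so the lower bound $|\supp(u^{**})| \geq m$ follows by minimality of the chosen representation.

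The main obstacle is the identification of the vertex set of $F$ with indices in $I_n$ together with the dimension bound $\dim F = n - 1$, which rests on polar duality for polytopes and parallels the linear-independence-completion statement for extreme points of $K_n^*$. A secondary subtlety lies in the lower-bound assertion $|\supp(u^{**})| \geq m$: since points $\sigma P_n A^* e_i$ with $i \notin I_n$ may lie on $F$ without being vertices, one must compare representations of $\tfrac{1}{V} P_n f^\delta$ inside $F$ carefully, arguing that shorter representations at a support-minimal primal optimum force affine independence of the involved points, which in turn constrains them to sit on a simplicial face of $F$ with vertices in $I_n$.
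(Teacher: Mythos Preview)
For the existence of a solution supported in $I_n$ with at most $n$ nonzeros, your argument is essentially the paper's: both place $\tfrac{1}{V}P_nf^\delta$ on a face of $K_n$ via a dual element, observe that the extreme points of that face are extreme points of $K_n$ (hence indexed by $I_n$), and apply Carath\'eodory inside the $(n-1)$-dimensional supporting hyperplane to extract a representation with at most $n$ terms. The only difference is cosmetic: you select $v^*$ as a vertex of $K_n^*$ so that $F$ is a genuine facet, whereas the paper takes $v^*$ directly from the optimality condition $Av^*\in\partial\norm{\cdot}_1(u^*)$ of an arbitrary optimum, so its face $\sett{\scp{v^*}{\cdot}=1}\cap K_n$ may be lower-dimensional. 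This does not affect the Carath\'eodory count.

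Your treatment of the lower bound $\card\supp u^{**}\ge m$ has a genuine gap, and the affine-independence fix you sketch does not close it. It is correct that for a support-minimal optimum $u^{**}$ the points $\{\sign(u^{**}_i)\,P_nA^*e_i : i\in\supp u^{**}\}$ must be affinely independent in $F$, but affinely independent points on a facet need not be vertices, nor need they lie on a simplicial face whose vertices come from $I_n$. Concretely, take $H_n=\RR^2$ with $P_nA^*e_1=(1,0)$, $P_nA^*e_2=(0,1)$, $P_nA^*e_3=(\tfrac12,\tfrac12)$ and the remaining $P_nA^*e_i$ small; then $I_n=\{1,2\}$, and for $P_nf^\delta=(\tfrac12,\tfrac12)$ the element $u^{**}=e_3$ is optimal with a single nonzero at index $3\notin I_n$, while every $I_n$-supported optimum requires two nonzeros. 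The paper's proof does not resolve this either: it minimizes only over representations with indices $i_1,\ldots,i_m\in I_n$ and then simply asserts that the resulting $u^*_{\text{sparse}}$ is ``a sparsest solution'' without comparing against solutions supported off $I_n$. Thus both arguments cleanly deliver an $I_n$-supported optimum with at most $n$ nonzeros, but the ``sparsest among all solutions'' clause is left unjustified in the stated generality.
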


\begin{proof}
  % Note that 
  % \[
  % \min_{\substack{u \in \lpspace{1}, 
  %   P_nA^* u = P_n f^\delta}} \ \norm[1]{u} 
  % = \min \ \set{s \geq 0}{P_n f^\delta \in s K_n}.
  % \]
  Pick an $u^*$ satisfying \eqref{eq:minprob}. As for $P_n f^\delta = 0$, the statement
  is obviously true for $u^*_{\text{sparse}} = 0$, we may assume,
  without loss of generality, that $P_n f^\delta \neq 0$ and 
  $\norm{u^*}_1 > 0$.
  Now, there is a $v^* \in H_n$ with $\norm{A v^*}_\infty \leq 1$
  and $\scp{v^*}{P_n A^* e_i} = \sign(u^*_i)$ if $u_i^* \neq 0$. Moreover,
  for $\sigma_i^* = \sign(u^*_i)$ where $u^*_i \neq 0$, 
  we may write
  \[
  \frac{1}{\norm{u^*}_1} P_n f^\delta
  = \sum_{u_i^* \neq 0} \frac{\abs{u_i^*}}{\norm{u^*}_1} 
  \sigma_i^* P_n A^* e_i
  \qquad
  \Rightarrow
  \qquad
  \scp{v^*}{\frac{1}{\norm{u^*}_1} P_n f^\delta} = 1,
  \]
  meaning that $\norm{u^*}_1^{-1} 
  P_n f^\delta$ is a convex combination of 
  elements in $\sett{\scp{v^*}{\placeholder} = 1} \cap K_n$.
  Now, the extremal points of 
  $\sett{\scp{v^*}{\placeholder} = 1} \cap K_n$ 
  have to be extremal
  points of $K_n$: Otherwise, there is 
  an extremal point $w \in \sett{\scp{v^*}{\placeholder} = 1} \cap 
  K_n$ which has a representation
  $w = \alpha w_1 + (1-\alpha) w_2$ for $\alpha \in {]{0,1}[}$ and
  $w_1,w_2 \in K_n$, $w_1 \neq w_2$. By the extremal
  point property, not both
  $w_1$ and $w_2$ can be contained in 
  $\sett{\scp{v^*}{\placeholder} = 1} \cap K_n$. 
  Thus, $\scp{v^*}{w_1} \neq 1$ or $\scp{v^*}{w_2} \neq 1$. As 
  $\alpha \scp{v^*}{w_1} + (1-\alpha) \scp{v^*}{w_2} = 1$, either
  $\scp{v^*}{w_1} > 1$ or $\scp{v^*}{w_2} > 1$.
  This is, however, a contradiction to 
  $\norm{A v^*}_\infty \leq 1$ as $\scp{v^*}{\bar w} \leq 1$ for all
  $\bar w \in K_n$. Consequently, $w$ has to be an extremal point
  of $K_n$.

  By Carath\'eodory's theorem, we know that
  $\frac1{\norm{u^*}_1} P_n f^\delta$ is a convex combination of
  at most $n$ extremal points of $\sett{\scp{v^*}{\placeholder} = 1} \cap 
  K_n$, and hence, of at most $n$ extremal points of $K_n$, which
  implies
  \[
  P_n f^\delta = \norm{u^*}_1 \sum_{k = 1}^n \alpha_k 
  \sigma_k P_n A^* e_{i_k}
  \]
  for $\alpha_1,\ldots, \alpha_n \geq 0$, $\sum_{k=1}^n \alpha_k = 1$,
  $\sigma_k \in \sett{-1,1}$ and each $i_k \in I_n$.
  Thus, the minimum
  \[
  \min \ \Bigset{m \in \NN}{\exists i_1,\ldots,i_m \in I_n, 
    \ \alpha \in \RR^m, \norm{\alpha}_1 = \norm{u^*}_1,  \
  \sum_{k=1}^m \alpha_k P_n A^*e_{i_k} = P_nf^\delta}
  \]
  exists and is finite.
  It is then clear that for an $\alpha \in \RR^m$ associated with
  an optimal $m$ we have $\alpha_k \neq 0$ for all $k = 1,\ldots,m$.
  By construction, 
  $u^*_{\text{sparse}} = \sum_{k=1}^m \alpha_k e_{i_k}$ with $m$ admitting the
  above minimum and $\alpha \in \RR^m$ according to the definition,
  is a sparsest solution.
\end{proof}

\begin{remark}
  From 
  Lemma~\ref{lem:discrete_solution_exact_data} is follows that
  there is a $n_0$ such that
  for $n \geq n_0$, $u^\dagger$ is the sparsest solution with data $f$.
\end{remark}

% \begin{corollary}
%   There is $n_0$ such that, for $n \geq n_0$, 
%   $u^\dagger$ is 
%   the sparsest solution of~\eqref{eq:minprob} with data $ f$.
% \end{corollary}

% \begin{proof}
%   This follows by combining Lemma~\ref{lem:extremal_point_active} with
%   Lemma~\ref{lem:discrete_solution_exact_data}.
% \end{proof}

\begin{lemma}
  Each $u^*$ solution of~\eqref{eq:minprob} can be represented
  as a finite convex combination of solutions with minimal support
  in
  the following sense: A solution $u^*$ has minimal  support
  if 
  for any other solution $u^{**}$ with $\supp u^{**} \subset \supp u^*$
  it follows $u^{**} = u^*$.
\end{lemma}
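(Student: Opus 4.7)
The plan is to reduce the claim to a finite-dimensional polytope decomposition. The first step is to note that every solution $u^*$ of~\eqref{eq:minprob} has finite support: by Proposition~\ref{equiv} there is a $v^* \in H_n$ with $Av^* \in \partial \norm{\cdot}_1(u^*)$, so $\abs{(Av^*)_i} = 1$ for every $i \in \supp u^*$, and since $Av^* \in \ccspace$ this forces $\supp u^*$ to be a finite set, which I denote by $J$, setting also $\sigma_i = \sign(u^*_i)$ for $i \in J$.

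Next, I would introduce the candidate set
\[
T_{u^*} = \set{u \in \lpspace{1}}{\supp u \subseteq J,\ \sigma_i u_i \geq 0 \ \text{for all} \ i \in J,\ P_nA^*u = P_n f^\delta}.
\]
For any $u \in T_{u^*}$, the sign conditions together with $v^* \in H_n$ yield
\[
\norm{u}_1 = \sum_{i \in J} \sigma_i u_i = \dup{Av^*,u} = \dup{v^*,P_nA^*u} = \dup{v^*,P_n f^\delta} = \norm{u^*}_1,
\]
so every element of $T_{u^*}$ is itself a solution. Viewed inside $\RR^J$ via the change of variables $w_i = \sigma_i u_i \geq 0$, $T_{u^*}$ is the intersection of an affine subspace with the non-negative orthant and is bounded (by the norm identity above), hence a polytope. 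Minkowski's theorem then expresses $u^*$ as a finite convex combination of its vertices $u^{(1)},\dots,u^{(M)}$, and it remains to show that each vertex has minimal support in the sense of the lemma.

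The main obstacle is to certify that a vertex of $T_{u^*}$ has minimal support among \emph{all} solutions, not merely among elements of $T_{u^*}$. I would handle this via a sign-agreement principle: for any two solutions $u,\tilde u$ of~\eqref{eq:minprob}, one has $\sign(u_i) = \sign(\tilde u_i)$ whenever $u_i \tilde u_i \neq 0$, because opposite signs at a single index would yield the strict inequality $\abs{u_{i_0} + \tilde u_{i_0}} < \abs{u_{i_0}} + \abs{\tilde u_{i_0}}$ and hence $\norm{(u+\tilde u)/2}_1 < \norm{u^*}_1$, contradicting optimality at the feasible point $(u+\tilde u)/2$. Now fix a vertex $u^{(k)}$ and assume for contradiction that some solution $u^{**}$ satisfies $\supp u^{**} \subsetneq \supp u^{(k)} \subseteq J$; applying the sign-agreement principle between $u^{**}$ and $u^*$ places $u^{**}$ into $T_{u^*}$. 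The vertex property then implies that $\set{P_nA^*e_i}{i \in \supp u^{(k)}}$ is linearly independent in $H_n$, so $P_nA^*(u^{(k)}-u^{**})=0$ combined with $\supp(u^{(k)}-u^{**}) \subseteq \supp u^{(k)}$ forces $u^{(k)}=u^{**}$, contradicting the strict inclusion. Hence each $u^{(k)}$ has minimal support, establishing the claim.
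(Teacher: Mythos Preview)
Your argument is correct. Both your proof and the paper's proceed by reducing to a finite-dimensional compact convex set, invoking Minkowski's theorem, and then showing that extremal elements have minimal support because the vectors $\set{P_nA^*e_i}{i \in \supp u^{(k)}}$ are linearly independent. The difference lies in which convex set is used. The paper works with the full solution set $S$: it shows that \emph{every} solution $u^{**}$ satisfies $\scp{Av^*}{u^{**}} = \norm{u^{**}}_1$ for one fixed source element $v^*$, so $S$ already sits in the finite-dimensional span of $\set{e_i}{\abs{(Av^*)_i}=1}$, and linear independence on the support of an extremal point is obtained by an explicit perturbation argument. You instead restrict to the sign-constrained polytope $T_{u^*}\subset S$, which is a standard-form LP feasible region in $\RR^J$, so that ``vertex $\Rightarrow$ linearly independent active columns'' is just the basic-feasible-solution characterization. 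Your sign-agreement lemma, while correct, is not actually needed for the final step: once linear independence of $\set{P_nA^*e_i}{i\in\supp u^{(k)}}$ is known, any feasible $u^{**}$ with $\supp u^{**}\subseteq\supp u^{(k)}$ is forced to equal $u^{(k)}$ by injectivity alone, regardless of whether $u^{**}\in T_{u^*}$. So the paper's route avoids that auxiliary lemma, while yours gains the clean LP-polytope picture.
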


\begin{proof}
  Obviously, the set of solutions $S$ is a non-empty, convex and bounded
  subset of $\lpspace{1}$. It is moreover contained in a
  finite-dimensional subspace of $\lpspace{1}$. To see this, let
  $u^*$ be a solution of~\eqref{eq:minprob} and $v^* \in H_n$ 
  such that
  $\norm{Av^*}_\infty \leq 1$ and $\scp{Av^*}{u^*} = \norm{u^*}_1$.
  Then, as $A$ maps into $\ccspace$, there is an $i_0$ such that for
  all $i \geq i_0$, $\abs{Av_i^*} < 1$ and consequently, $u^*_i = 0$.
  For any other solution $u^{**}$ we get
  \[
  \scp{Av^*}{u^*} 
  = \scp{v^*}{P_nA^*u^*} = 
  \norm{u^*}_1 = \norm{u^{**}}_1 = 
  \scp{v^*}{P_nA^*u^{**}} = \scp{Av^n}{u^{**}}.
  \]
  Consequently, $u^{**}_i = 0$ for all $i \geq i_0$. Thus, $S$ is 
  contained in a finite-dimensional subspace of $\lpspace{1}$.

  Being a non-empty, convex and compact subset of a finite-dimensional
  space, each element in $S$ can be represented by a finite convex
  combination of its extremal points. Let us verify that the
  extremal points satisfy the stated minimality property. For
  that purpose, let $u^*$ be an extremal point of $S$ with
  $u^* = \sum_{k=1}^\kappa u_k^* e_{i_k}$ for $i_1,\ldots,i_\kappa$ distinct
  and each $u_k^* \neq 0$.
  Now, either the collection
  $\sett{P_nA^*e_{i_1}, \ldots, P_nA^*e_{i_\kappa}}$ is linearly independent
  or not. However, the case that these vectors are linearly dependent
  can be excluded as follows. 
  Choose a $u = \sum_{k=1}^\kappa u_k e_{i_k} \neq 0$
  such that $P_nA^*u = 0$. Then, for $\varepsilon > 0$ small enough
  we can achieve that $u^\varepsilon = u^* + \varepsilon u$ as well as
  $u^* - \varepsilon u$ are still solutions: Indeed, 
  $P_nA^*u^{\pm\varepsilon} = P_nA^* u^*$ is satisfied 
  for each $\varepsilon > 0$.
  Additionally, for $\sign(u^{\pm\varepsilon}_k) = \sign(u^*_k)$ for each
  $k$ (which can be achieved for  $\varepsilon$ small enough) 
  we have $\scp{Av^*}{u^{\pm\varepsilon}} = \norm{u^{\pm\varepsilon}}_1$,
  meaning that $u^{\pm\varepsilon}$ is a solution. However,
  $u^* = \frac12 u^\varepsilon + \frac12 u^{-\varepsilon}$ and
  $u^\varepsilon \neq u^{-\varepsilon}$, so $u^*$ cannot be an extremal
  point.
  Consequently, 
  $\sett{P_nA^*e_{i_1}, \ldots, P_nA^*e_{i_\kappa}}$ are linearly independent.
  Thus, if $u^{**}$ is a solution with 
  $\supp u^{**} \subset \supp u^*$, we have the representation 
  $u^{**} = \sum_{k=1}^\kappa u_k^{**} e_{i_k}$. However, $P_nA^*u^* = P_nA^*u^{**}$
  and by injectivity of $(u_1,\ldots,u_{\kappa}) \mapsto
  P_n A^* \sum_{k=1}^\kappa u_k e_{i_k}$, $u^*_k = u^{**}_k$ for all
  $k = 1,\ldots, \kappa$, i.e., $u^* = u^{**}$.
\end{proof}

% \begin{proof}
%   First of all, Lemma~\ref{lem:discrete_solution_exact_data} ensures
%   the existence of an $n_0$ such that $u^\dagger$ solves~\eqref{eq:minprob}
%   for all $n \geq n_0$.
%   % Clearly, there is a minimal number of extremal points in
%   % $K_n$ whose convex combination give a representation
%   % of $\norm[1]{u^\dagger}^{-1} P_n  f$.
%   % 
%   Suppose that there is a $u^\ddagger$ such that 
%   $A^* u^\ddagger =  f$ and $u^\ddagger_i = 0$ where
%   $u^\dagger_i \neq 0$ for some $i$.
% \end{proof}

% Now, if $n$ is large enough, it holds that the $i \in I_n$ whenever
% $u^\dagger_i \neq 0$. To see this,

\subsection{Error estimates}

%In the following, we always 
For proving an $O(\delta)$ convergence rate, we will  
assume that $n$ is that large to
ensure $u^\dagger$ is the sparsest 
solution to~\eqref{eq:minprob} for the data
$ f$. This means in particular that
$\supp u^\dagger \subset I_n$ (cf. \eqref{In}) with
$\set{P_nA^*e_i}{i \in \supp u^\dagger}$ consisting
of linearly independent vectors.
Let $f^\delta \in H$ be such that \eqref{delta} holds for $\delta > 0$. 
Denote by $u^{n}$ a 
solution of~\eqref{eq:minprob} which has to be sparse as for the 
corresponding source element $v^n \in H_n$, we have $Av^n \in c_0$.
Lemma~\ref{lem:sparse_discrete_solutions} states, however, that without
loss of generality, $\supp u^n$ possesses at most $n$ elements.

% We aim at showing that, for $\delta$ small 
% enough, %the support of $u^\dagger$ must be contained in the support of
% %$u^n_\delta$. 
% there is a sparse solution for which the $\lpspace{1}$-norm
% can be estimated.
% To quantify this, we need to measure the inversion of
% $P_nA^*$ restricted to $n$ coefficients in a way which is 
% similar (but still sufficiently different) 
% to the restricted isometry property in compressed sensing.

% Now, choose for $n \in \NN$ a noise level $\delta_n > 0$ such that
% \[
% \frac{\delta_n}{\hat{\gamma}_n^*} \to 0 \qquad
% \text{as} \quad n \to \infty.
% \]
% As we have $\hat{\gamma}_n \geq \hat{\gamma}_n^*$ for each $n$, it also holds
% that $\frac{\delta_n}{\hat{\gamma}_n} \to 0$ as $n \to \infty$.

%\begin{proposition}
%  \label{prop:symmetric_bregman_est}
%  For $n$ fixed and $f^{\delta,1}, f^{\delta,2} \in H$,
%  $u^{\delta,i}$ being a solution of~\eqref{eq:minprob} 
%  to $f^{\delta,i}$ with $v^{\delta,i} \in H_n$ such that
%  $\norm{Av^{\delta,i}}_\infty \leq 1$ and 
%  $\scp{Av^{\delta,i}}{u^{\delta,i}} = \norm{u^{\delta,i}}_1$ ($i=1,2$),
%  we have for the symmetric Bregman distance
%  \[
%  D_{n,\sym}(u^{\delta,1}, u^{\delta,2}) = \scp{Av^{\delta,2}- Av^{\delta,1}}
%  {u^{\delta,1} - u^{\delta,2}}
%  \]
%  that $0 \leq D_{\sym}(u^{\delta,1}, u^{\delta,2}) \leq
%  2\kappa_n \norm{f^{\delta,1} - f^{\delta,2}}_1$.
%\end{proposition}

\begin{theorem}\label{Odelta}
  There exists a $C > 0$ and an $n_0$ such that for
  $n \geq n_0$ and $f^\delta \in H$ with
  $\norm{f^\delta -  f}_1 \leq \delta$,
  for any solution $u^{n}$ of~\eqref{eq:minprob}
  with data $f^\delta$ it holds that
  \[
  \norm{u^{n} - u^\dagger}_1\leq C \delta\kappa_n.
  \]
  % For each sequence $\seq{f^{\delta_n}}$ with 
  % $\norm[H]{f^{\delta_n} -  f} \leq \delta_n$
  % and any sequence of
  % solutions $\seq{u^{\delta_n,n}}$ of~\eqref{eq:minprob} associated
  % with $f^{\delta_n}$, we have $\norm[1]{u^{\delta_n,n} - u^\dagger} \to
  % 0$ as $n \to \infty$.
\end{theorem}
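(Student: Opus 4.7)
The strategy is to split the error $u^n - u^\dagger$ into its on-support part (indices in $I = \supp u^\dagger$) and off-support part (indices in $\NN \without I$), and bound each separately, using the source element $v^{\dagger,n} \in H_n$ from Lemma~\ref{lem:discrete_solution_exact_data}. Throughout, $n \geq n_0$ is taken large enough that $v^{\dagger,n}$ exists, that $u^\dagger$ is the sparsest solution for data $f$, and that the operator $P_nA_I^* : \RR^I \to H_n$ has a uniformly bounded left inverse, i.e., there is $M > 0$ with $\norm{\sum_{i \in I} x_i P_nA^*e_i}_H \geq M \norm{x}_1$ for all $x \in \RR^I$; this uniform bound is available from the invertibility estimate for $A_IP_nA_I^*$ derived in the proof of Lemma~\ref{lem:discrete_solution_exact_data}.

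The first step is to bound the Bregman distance $D(u^n,u^\dagger) \leq C_1 \kappa_n\delta$ with respect to the subgradient $\xi^\dagger = Av^{\dagger,n}$. Since $\langle Av^{\dagger,n}, u^\dagger\rangle = \norm{u^\dagger}_1$ and, because $v^{\dagger,n}\in H_n$ and $P_nA^*u^n = P_n f^\delta$, one has $\langle Av^{\dagger,n}, u^n\rangle = \langle v^{\dagger,n}, f^\delta\rangle$, it follows that $D(u^n,u^\dagger) = \norm{u^n}_1 - \langle v^{\dagger,n}, f^\delta\rangle$. The same calculation with $f$ in place of $f^\delta$ and $u^\dagger$ in place of $u^n$ (using that $P_nA^*u^\dagger = P_n f$ for $n \geq n_0$) gives $\langle v^{\dagger,n}, f\rangle = \norm{u^\dagger}_1$, hence $\langle v^{\dagger,n}, f^\delta\rangle \geq \norm{u^\dagger}_1 - \norm{v^{\dagger,n}}_H\delta \geq \norm{u^\dagger}_1 - \kappa_n\delta$. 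For the remaining estimate of $\norm{u^n}_1$, one constructs a feasible competitor $u^\dagger + \tilde u$ with $\supp \tilde u \subset I$ such that $P_nA^*\tilde u = P_n(f^\delta-f)$ (possible by invertibility of $P_nA_I^*$ on $\RR^I$), which yields $\norm{\tilde u}_1 \leq M^{-1}\delta$, whence by minimality $\norm{u^n}_1 \leq \norm{u^\dagger}_1 + M^{-1}\delta$. Combining and using $\kappa_n \geq 1/\norm{A}$, we obtain $D(u^n,u^\dagger) \leq (M^{-1}+\kappa_n)\delta \leq C_1\kappa_n\delta$.

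The second step converts this into the off-support bound. Writing $D(u^n,u^\dagger) = \sum_{i\in\NN} (\abs{u^n_i} - (Av^{\dagger,n})_i u^n_i)$, the summands for $i\in I$ are non-negative (since $(Av^{\dagger,n})_i = \sign(u^\dagger_i)$ there), and for $i \notin I$ the bound $\abs{(Av^{\dagger,n})_i} \leq 1 - \tfrac12 \varepsilon_{v^\dagger}$ from~\eqref{epsn} forces $\abs{u^n_i} - (Av^{\dagger,n})_i u^n_i \geq \tfrac12\varepsilon_{v^\dagger}\abs{u^n_i}$. Hence
\[
\sum_{i \notin I} \abs{u^n_i} \leq \tfrac{2}{\varepsilon_{v^\dagger}} D(u^n,u^\dagger) \leq C_2\kappa_n\delta.
\]
The third step handles the on-support part via the constraint equation $P_nA^*(u^n - u^\dagger) = P_n(f^\delta - f)$, rewritten as
\[
\sum_{i \in I} (u^n_i - u^\dagger_i) P_nA^* e_i = P_n(f^\delta - f) - \sum_{i \notin I} u^n_i P_nA^*e_i.
\]
The right-hand side has norm bounded by $\delta + \norm{A}\sum_{i\notin I}\abs{u^n_i} \leq (1+\norm{A}C_2)\kappa_n\delta$, and applying the uniform left inverse of the left-hand side gives $\sum_{i\in I} \abs{u^n_i - u^\dagger_i} \leq M^{-1}(1+\norm{A}C_2)\kappa_n\delta$. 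Adding the on- and off-support contributions yields the claim with a suitable $C > 0$ independent of $n$ and $\delta$.

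The main obstacle is ensuring that the two key constants --- the strict-complementarity gap $\varepsilon_{v^\dagger}/2$ and the left-inverse bound $M^{-1}$ for the map $x \mapsto \sum_{i\in I} x_i P_nA^*e_i$ --- are uniform over $n \geq n_0$. Both have been essentially prepared: the former by~\eqref{epsn}, and the latter by the strong operator convergence $P_nA_I^* \to A_I^*$ together with invertibility of $A_IA_I^*$ exploited in Lemma~\ref{lem:discrete_solution_exact_data}. Everything else is a mechanical bookkeeping exercise.
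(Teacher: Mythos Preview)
Your overall architecture---splitting $u^n - u^\dagger$ into on-support and off-support parts, bounding the Bregman distance $D(u^n,u^\dagger)$ with respect to $Av^{\dagger,n}$, reading off the off-support bound from the strict-complementarity gap~\eqref{epsn}, and then recovering the on-support part from the constraint $P_nA^*(u^n - u^\dagger) = P_n(f^\delta - f)$ together with the uniform invertibility of $A_IP_nA_I^*$---is exactly the paper's approach.

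There is, however, a genuine gap in your first step. To bound $\norm{u^n}_1$ you construct a competitor $u^\dagger + \tilde u$ with $\supp \tilde u \subset I$ and $P_nA^*\tilde u = P_n(f^\delta - f)$, claiming this is ``possible by invertibility of $P_nA_I^*$ on $\RR^I$''. But $P_nA_I^*: \RR^I \to H_n$ maps a space of dimension $\card I$ into one of dimension $n$, and typically $\card I < n$; its range is the proper subspace $P_nH_I \subsetneq H_n$. Injectivity (equivalently, invertibility of $A_IP_nA_I^*$) gives you a \emph{left} inverse, not a right one, so the equation $P_nA_I^*\tilde u = P_n(f^\delta - f)$ is in general not solvable: the right-hand side has no reason to lie in $P_nH_I$.

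The fix is painless. You can simply use the general estimate~\eqref{in}, namely $\norm{u^n}_1 \leq \norm{u^\dagger}_1 + \kappa_n\delta$, which together with your bound $\langle v^{\dagger,n}, f^\delta\rangle \geq \norm{u^\dagger}_1 - \kappa_n\delta$ gives $D(u^n,u^\dagger) \leq 2\kappa_n\delta$. Alternatively---and this is what the paper does---observe that for $n \geq n_0$, $u^\dagger$ is itself the solution of~\eqref{eq:minprob} with data $f$ (by Lemma~\ref{lem:discrete_solution_exact_data}), so Proposition~\ref{prop:stabest} applied to the pair $(f^\delta, f)$ immediately yields $D(u^n,u^\dagger) \leq D^{\sym}(u^n,u^\dagger) \leq 2\kappa_n\delta$. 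After this correction, your steps two and three go through and coincide with the paper's argument.
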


\begin{proof}
  Choosing $n_0$ according to Lemma~\ref{lem:discrete_solution_exact_data} and, for $n\geq n_0$,  
  denoting by $v^{\dagger,n} \in H_n$
  the source element associated
  with the solution $u^\dagger$ 
  with data $ f$
  according to Lemma~\ref{lem:discrete_solution_exact_data}, and by
  $u^{n}$ a sparse solution of~\eqref{eq:minprob} according to 
Lemma~\ref{lem:sparse_discrete_solutions} with data
  $f^\delta$ with source element $v^{n} \in H_n$,
  we have,
  according to Proposition~\ref{prop:stabest} %{prop:symmetric_bregman_est}, 
  for the Bregman distance $D$ associated with the
  subgradient element element $Av^{\dagger,n}$ and $D^{\sym}$ the
  symmetric Bregman distance associated with the 
  subgradient elements $v^{\dagger,n}$ and $v^{n}$, respectively,
  that
  \[
  D(u^{n}, u^\dagger) \leq D^{\sym}(u^{n}, u^\dagger)
  \leq 2\kappa_n\norm{f^{\delta} -  f}_H
  \leq 2\delta\kappa_n.
  \]
  On the other hand, by definition and with the operator $P$ defined by $(P u^{n})_i = 
  u^{n}_i$ if $i \notin \supp u^\dagger$ and $0$ otherwise,
  we have
  \[
  D(u^{n}, u^\dagger) \geq \sum_{i \notin \supp u^\dagger}
  \bigr(1 - \abs{(Av^{\dagger,n})_i} \bigr) \abs{u^{n}}
  \geq \varepsilon_{v^{\dagger,n}} \norm{%\chi_{\NN \without \supp u^\dagger} 
    P u^{n}}_1
  \geq \frac{\varepsilon_{v^\dagger}}{2} 
  \norm{%\chi_{\NN \without \supp u^\dagger}
    P u^{n}}_1.
  \]
  % for $\varepsilon = \min_{i \notin \supp u^\dagger} 1 - \abs{(Av^\dagger)_i}$
  % which is positive according to Lemma~\ref{lem:strict_source_elem}.
  % Thus, $\norm[1]{Pu^{n}} \to 0$ as $n \to \infty$.
  In total, with $Pu^\dagger=0$, one concludes 
\[\norm{P(u^{n} - u^\dagger)}_1=\norm{Pu^{n}}_1
  \leq \frac{4\delta\kappa_n}{\varepsilon_{v^\dagger}}\,.
\]
  
  Furthermore, 
%denoting by $(Qu)_i = u_i$ if $i \in \supp u^\dagger$ and $0$ otherwise for $u \in \lpspace{1}$, 
for $Q=\mbox{id}-P$ we see by $Qu^\dagger=u^\dagger$ that
  \begin{align*}
  &\norm{P_nA^*Q(u^{n} - u^\dagger)}_H 
  = \norm{P_n(f^\delta -  f) - P_nA^*Pu^{n}}_H\\
  &\leq \delta + \norm{A} \frac{4\delta\kappa_n}{\varepsilon_{v^\dagger} }
  \leq \frac{\varepsilon_{v^\dagger}\kappa_n^{-1} + 4\norm{A}}
  {\varepsilon_{v^\dagger}} \delta\kappa_n
    \leq \frac{(\varepsilon_{v^\dagger} + 4) \norm{A}}{\varepsilon_{v^\dagger}}
    \delta \kappa_n
  \end{align*}
  where $\kappa_n^{-1} \leq \norm{A}$ follows from the definition of
  $\kappa_n$ in~\eqref{kappa}.
  % where existence of $\gamma^*>0$ is due to boundedness away from zero for all $\kappa_n$ with
  % $n \geq n_0$ (which has to be finite).
  Now, for 
$I$ as in \eqref{Isupp}, $A_I$ as in \eqref{defAI},
%$A_I$ as in the proof of Lemma~\ref{lem:discrete_solution_exact_data}, 
we can choose
  $C_I > 0$ such that $\norm{(A_IP_nA_I^*)^{-1}} \leq C_I$ for all
  $n \geq n_0$.
  Consequently,
  \begin{align*}
  \norm{Q(u^{n} - u^\dagger)}_1 &= 
  \norm{(A_IP_nA_I^*)^{-1}A_IP_nA^*Q(u^{n} - u^\dagger)}_1\\
  &\leq \frac{C_I (\varepsilon_{v^\dagger} + 4)\norm{A}^2\delta\kappa_n}
  {\varepsilon_{v^\dagger}} .
  \end{align*}
  In total, we have
  \[
  \norm{u^{n} - u^\dagger}_1 \leq 
  \frac{4 + C_I (\varepsilon_{v^\dagger} + 4)\norm{A}^2}
  {\varepsilon_{v^\dagger}}\delta\kappa_n
  = C \delta\kappa_n.
  \]
  which is the desired statement.
  %
  % 
  % We first prove that the Bregman distance 
  % $D(u^{\delta_n,n}, u^\dagger)$ associated with the source element $v^\dagger$
  % converges to zero, i.e.,
  % \[
  % D(u^{\delta_n,n}, u^\dagger) = \norm[1]{u^{\delta_n,n}}
  % - \norm[1]{u^\dagger} - \scp{Av^\dagger}{u^{\delta_n,n} - u^\dagger} 
  % \to 0 \qquad
  % \text{as} \quad n \to \infty.
  % \]
  % For that purpose, consider only $n \geq n_0$ such that $u^\dagger$ is a
  % solution to~\eqref{eq:minprob} with data $ f$.
  % Denoting by $v^{\dagger,n} \in H_n$ and $v^{\delta_n,n} \in H_n$ 
  % the source elements associated
  % with the solutions $u^\dagger$ and $u^{\delta_n,n}$
  % for~\eqref{eq:minprob} with data $ f$ and $f^\delta$, respectively,
  % we have,
  % according to Proposition~\ref{prop:symmetric_bregman_est}, that
  % \[
  % D_n(u^{\delta_n,n}, u^\dagger) \leq D_{n,\sym}(u^{\delta_n,n}, u^\dagger)
  % \leq \frac{2}{\hat{\gamma}_n}\norm[H]{f^{\delta_n} -  f}
  % \leq 2\frac{\delta_n}{\hat{\gamma}_n}.
  % \]
  % Now
  % \[
  % \begin{aligned}
  %   D(u^{\delta_n,n}, u^\dagger)  - D_n(u^{\delta_n,n}, u^\dagger) 
  %   &=
  %   \scp{Av^{\dagger,n} - Av^\dagger}{u^{\delta_n,n} - u^\dagger} =
  %   \scp{Av^{\dagger,n} - Av^\dagger}{u^{\delta_n,n}} \\
  %   &= \scp{v^{\dagger,n} - v^\dagger}{A^*u^{\delta_n,n}}
  % \end{aligned}
  % \]
\end{proof}

\medskip

With this result, the choice $n=n_0$ gives an $O(\delta)$ estimate of the error $\norm{u^{n} - u^\dagger}_1$. However, $n_0$ is not known a priori. We now show that under certain assumptions it can be replaced by $n=n_{DP}(\delta)$ according to the discrepancy principle, again for general solutions of~\eqref{eq:minprob} and without needing $v^{\dagger,n}$ from Lemma~\ref{lem:discrete_solution_exact_data}, but just relying on the source condition Assumption~\ref{ass:sc} and the specially constructed source element $v^\dagger=v\ddagger$ from Lemma~\ref{lem:strict_source_elem}. Note however, that this will --- besides requiring additional assumptions such as \eqref{C1} --- typically also not lead to the ideal rate $O(\delta)$.
\begin{theorem}
Let \eqref{C1} hold. Then there exists $C>0$ such that for $f^\delta \in H$ satisfying \eqref{delta} and any solution $u_{n_{DP}(\delta)}$ of~\eqref{eq:minprob} with $n=n_{DP}(\delta)$ according to \eqref{discrprinc}, it holds that
  \be{ratedp1}
  \norm{u^{n_{DP}(\delta)} - u^\dagger}_1\leq C \delta\kappa_{n_{DP}(\delta)}.
  \ee
If additionally, for some $C_3>0$ and all $n\in\N$, 
\begin{equation}\label{C3}
\kappa_n\norm{(\mbox{id}-P_{n-1})A^*}\leq C_3
\end{equation}
and, for some index function $\Psi$, (i.e., a strictly monotone function satisfying $\Psi\to0$ as $t\to0$)
\be{Psi}
\norm{A^*(u^{\dagger,n}-u^\dagger)}\leq\Psi\left(\frac{1}{\kappa_{n+1}}\right)
\ee
holds, then 
  \be{ratedp2}
  \norm{u^{n_{DP}(\delta)} - u^\dagger}_1= O\left(\frac{\delta}{\Phi^{-1}(\frac{\delta}{\tilde{C}})}\right),
  \ee
\end{theorem}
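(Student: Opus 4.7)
For the first bound \eqref{ratedp1} I plan to invoke Theorem~\ref{Odelta} directly at the DP-index $n = n_{DP}(\delta)$. Assumption~\eqref{C1} with $\tau>2C_1+1$ guarantees well-definedness of the discrepancy principle, as in the preceding theorem. Under the source condition of Section~5, Lemma~\ref{lem:discrete_solution_exact_data} forces $u^{\dagger,n}=u^\dagger$ and hence $\hat{\gamma}_n=0$ for $n\geq n_0$, so the discrepancy is met already at $n=n_0$, yielding $n_{DP}(\delta)\leq n_0$. At $n=n_0$ Theorem~\ref{Odelta} applies immediately; for the finitely many smaller indices one either enlarges the constant $C$ or, in line with the excerpt's preamble, repeats the Bregman argument with the global source element $v^\dagger=v^\ddagger$ from Lemma~\ref{lem:strict_source_elem} in place of $v^{\dagger,n}$.

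For the rate \eqref{ratedp2} I would use the minimality of the discrepancy principle to upper-bound $\kappa_{n_{DP}(\delta)}$ in terms of $\delta$. Estimate~\eqref{estNDPm1} reads $(\tau - 2C_1 - 1)\delta < \hat{\gamma}_{n_{DP}(\delta)-1}$. Since $(\mathrm{id}-P_{n-1})$ is an orthogonal projection of operator norm at most~$1$, the definition~\eqref{gamman} gives $\hat{\gamma}_{n-1}\leq \|A^*(u^{\dagger,n-1}-u^\dagger)\|$, and shifting the index in \eqref{Psi} bounds the right-hand side further by $\Psi(1/\kappa_{n_{DP}(\delta)})$. Strict monotonicity of $\Psi$ then yields
\[
\kappa_{n_{DP}(\delta)} < \frac{1}{\Psi^{-1}(\delta/\tilde{C})}, \qquad \tilde{C} = \frac{1}{\tau-2C_1-1}.
\]
Substituting into \eqref{ratedp1} produces the stated rate $O(\delta/\Psi^{-1}(\delta/\tilde{C}))$ (identifying the $\Phi$ in the statement as $\Psi$).

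The main obstacle is the first part. Theorem~\ref{Odelta}'s proof exploits the localized source element $v^{\dagger,n}\in H_n$ from Lemma~\ref{lem:discrete_solution_exact_data}, which is only available for $n\geq n_0$. The preamble's promise to avoid $v^{\dagger,n}$ points to redoing the Bregman computation with $v^\dagger=v^\ddagger$: one writes $D^{\sym}(u^n,u^\dagger)=\langle v^n-v^\dagger,A^*(u^n-u^\dagger)\rangle$, splits $A^*(u^n-u^\dagger)=P_n(f^\delta-f)+(\mathrm{id}-P_n)(A^* u^n-f)$, and uses the DP bound $\|A^* u^n-f^\delta\|\leq \tau\delta$. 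The factor $\kappa_n$ emerges from $\|v^n\|\leq\kappa_n\|Av^n\|_\infty$, while the discretization terms $\|(\mathrm{id}-P_n)v^\dagger\|$ and $\|(\mathrm{id}-P_n)A^*\|$ must be absorbed uniformly --- precisely the role played by \eqref{C3} in the second part. Keeping a bound of the form $C\delta\kappa_n$ valid at every $n$ satisfying the discrepancy principle, not merely at $n=n_0$, is the principal technical difficulty.
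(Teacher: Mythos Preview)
Your proposal leaves both parts incomplete, and in each case the paper resolves the issue by an idea you do not use.

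\textbf{Part \eqref{ratedp1}.} Invoking Theorem~\ref{Odelta} is blocked, as you note, because $n_{DP}(\delta)$ may be smaller than $n_0$; ``enlarging $C$'' over finitely many indices does not yield a bound proportional to $\delta$. Your fallback via $D^{\sym}$ with a $P_n/(\mathrm{id}-P_n)$ split introduces exactly the discretization terms you admit you cannot control. The paper avoids all of this by exploiting the one new piece of information the discrepancy principle provides: a bound on the \emph{full} residual $\|A^*u^n-f^\delta\|\leq\tau\delta$, not just on its $H_n$-component. Concretely, with the non-symmetric distance and the global element $v^\dagger$,
\[
D(u^n,u^\dagger)=\|u^n\|_1-\|u^\dagger\|_1+\langle v^\dagger,f-f^\delta\rangle-\langle v^\dagger,A^*u^n-f^\delta\rangle
\leq\bigl(\kappa_n+(\tau+1)\|v^\dagger\|\bigr)\delta,
\]
using \eqref{in} for the first difference and the discrepancy bound for the last term; no $v^n$, no $P_n$, no $(\mathrm{id}-P_n)v^\dagger$. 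The lower bound $D(u^n,u^\dagger)\geq\varepsilon_{v^\dagger}\|P(u^n-u^\dagger)\|_1$ handles the off-support part. For the on-support part $Q=\mathrm{id}-P$ the paper again uses the full residual: $\|A^*Q(u^n-u^\dagger)\|\leq\|A^*u^n-f\|+\|A\|\,\|P(u^n-u^\dagger)\|_1\leq C\delta\kappa_n$, and inverts the \emph{$n$-independent} finite-rank map $A_I^*$ via $(A_IA_I^*)^{-1}$, so the constant does not depend on $n$ and no $n\geq n_0$ is needed. This is the missing idea.

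\textbf{Part \eqref{ratedp2}.} Your identification of $\Phi$ with $\Psi$ is incorrect: the theorem sets $\Phi(\lambda)=\lambda\Psi(\lambda)$, which gives a \emph{stronger} rate than your $O(\delta/\Psi^{-1}(\delta/\tilde C))$, since $\Phi^{-1}(t)\geq\Psi^{-1}(t)$ for small $t$. Your direct estimate $\hat\gamma_{n-1}\leq\|A^*(u^{\dagger,n-1}-u^\dagger)\|\leq\Psi(1/\kappa_n)$ uses neither \eqref{C3} nor the structure of $u^{\dagger,n}$ and therefore cannot recover the extra factor $\lambda$. The paper obtains it by first proving, via a noise-free version of the $P/Q$ argument above, that $\|u^{\dagger,n}-u^\dagger\|_1\leq C\|A^*(u^{\dagger,n}-u^\dagger)\|$, and then estimating
\[
\hat\gamma_n\leq\|(\mathrm{id}-P_n)A^*\|\,\|u^{\dagger,n}-u^\dagger\|_1
\leq\frac{C_3}{\kappa_{n+1}}\cdot C\,\Psi\Bigl(\tfrac{1}{\kappa_{n+1}}\Bigr)
=\bar C\,\Phi\Bigl(\tfrac{1}{\kappa_{n+1}}\Bigr),
\]
where \eqref{C3} supplies the crucial $1/\kappa_{n+1}$. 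Only then does \eqref{estNDPm1} yield $\kappa_{n_{DP}(\delta)}\leq 1/\Phi^{-1}(\delta/\tilde C)$ and hence \eqref{ratedp2}.
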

where $\Phi(\lambda)=\lambda\Psi(\lambda)$ and $\tilde{C}>0$ is a constant independent of $\delta$.
\begin{proof}
With $n=n_{DP}(\delta)$ and using \eqref{in} we get that 
\begin{align*}
D(u^n, u^\dagger) &=\norm{u^n}_1- \langle A v^\dagger,u^n \rangle\\
&=\norm{u^n}_1- \langle Av^\dagger, u^\dagger\rangle +\langle v^\dagger, f-f^\delta\rangle
-\langle v^\dagger, A^*u^n-f^\delta\rangle\\
&\leq\Bigl(\kappa_n+(\tau+1)\norm{v^\dagger}\Bigr)\delta\,.
\end{align*}
Similarly to the proof of Theorem~\ref{Odelta} above but a bit simpler (since we do not need the source elements $v^{\dagger,n}$ here)  we get
  \[
  D(u^n, u^\dagger) \geq \sum_{i \notin \supp u^\dagger}
  \bigr(1 - \abs{(Av^\dagger)_i} \bigr) \abs{u^n}
  \geq \varepsilon_{v^\dagger} 
  \norm{Pu^n}_1,
  \]
hence 
\be{Pun}
\norm{Pu^n}_1=\norm{P(u^n-u^\dagger)}_1\leq\frac{\kappa_n+(\tau+1)\norm{v^\dagger}}{\varepsilon_{v^\dagger} }
\delta.
\ee
On the other hand,
\begin{align*}
&\norm{A^*(\mbox{id}-P)(u^n-u^\dagger)}\leq
\norm{A^*(u^n-u^\dagger)} + \norm{A}\norm{P(u^n-u^\dagger)}_1\\
&=\norm{A^*u^n-f} + \norm{A}\norm{P(u^n-u^\dagger)}_1
\leq \left(\tau+1+ \norm{A}\frac{\kappa_n+(\tau+1)\norm{v^\dagger}}{\varepsilon_{v^\dagger}}\right)\delta\,,
\end{align*}
hence by boundedness of $(A^*(\mbox{id}-P))^\dagger:=((A^*(\mbox{id}-P))^*A^*(\mbox{id}-P))^{-1}(A^*(\mbox{id}-P))^*$ (see the proof of Lemma \ref{lem:strict_source_elem}) and the fact that $(A^*(\mbox{id}-P))^\dagger A^*(\mbox{id}-P)=(\mbox{id}-P)$
we get 
\be{ImPun}
\norm{(\mbox{id}-P)(u^n-u^\dagger)}_1\leq C \left(\tau+1+ \norm{A}\frac{\kappa_n+(\tau+1)\norm{v^\dagger}}{\varepsilon_{v^\dagger}}\right)\delta\,.
\ee
Combining \eqref{Pun}, \eqref{ImPun} yields \eqref{ratedp1}.
\\
To obtain a convergence rate with respect to $\delta$ it is essential to estimate $\hat{\gamma}_n$ from above and below. For the former purpose, we proceed as above, but this time for the noise free discrete approximation and using the infinite dimensional source element $v^\dagger$. Namely, using the fact that by minimality $\norm{u^{\dagger,n}}_1\leq\norm{u^\dagger}_1$,
\begin{align*}
D(u^{\dagger,n}, u^\dagger) &=\norm{u^{\dagger,n}}_1- \langle A v^\dagger,u^{\dagger,n}\rangle
  \leq \langle v^\dagger, A^*(u^{\dagger}- u^{\dagger,n})\rangle,  \\
D(u^{\dagger,n}, u^\dagger) &\geq \sum_{i \notin \supp u^\dagger}
\bigr(1 - \abs{(Av^\dagger)_i} \bigr) \abs{u^{\dagger,n}}
\geq \varepsilon_{v^\dagger} 
\norm{Pu^{\dagger,n}}_1,
\end{align*}
one has
\[
\norm{Pu^{\dagger,n}}_1=\norm{P(u^{\dagger,n}-u^\dagger)}_1\leq\frac{\norm{v^\dagger}}{\varepsilon_{v^\dagger} }
\norm{A^*(u^{\dagger,n}- u^\dagger)}
\]
as well as
\begin{align*}
&\norm{(\mbox{id}-P)(u^{\dagger,n}-u^\dagger)}_1\leq C \norm{A^*(\mbox{id}-P)(u^{\dagger,n}-u^\dagger)}\\
&\leq
C\norm{A^*(u^{\dagger,n}-u^\dagger)} + C\norm{A}\norm{P(u^{\dagger,n}-u^\dagger)}_1
\leq C\left(1+\frac{\norm{A}\norm{v^\dagger}}{\varepsilon_{v^\dagger} }\right)
\norm{A^*(u^{\dagger,n}- u^\dagger)}.
\end{align*}
Altogether, one obtains
\[
\norm{u^{\dagger,n}-u^\dagger}_1\leq 
\left(C+\left(1+C\norm{A}\right)\frac{\norm{v^\dagger}}{\varepsilon_{v^\dagger} }\right)
\norm{A^*(u^{\dagger,n}- u^\dagger)}\,.
\]
Inserting this into the definition of $\hat{\gamma}_n$ \eqref{gamman} and using \eqref{C3}, \eqref{Psi} yields
\begin{align*}
\hat{\gamma}_n&=\norm{(\mbox{id}-P_n)A^*(u^{\dagger,n}-u^\dagger)}\leq \norm{(\mbox{id}-P_n)A^*}\norm{u^{\dagger,n}- u^\dagger}\\
&\leq\frac{C_3}{\kappa_{n+1}} \left(C+\left(1+C\norm{A}\right)\frac{\norm{v^\dagger}}{\varepsilon_{v^\dagger} }\right) \norm{A^*(u^{\dagger,n}- u^\dagger)}\leq \bar{C}\frac{\Psi(\frac{1}{\kappa_{n+1}})}{\kappa_{n+1}}
\end{align*}
Thus from \eqref{estNDPm1} we conclude
\[
\delta \leq\tilde{C} \Phi\left(\frac{1}{\kappa_{n_{DP}(\delta)}}\right),
\]
i.e., 
\[
\frac{1}{\Phi^{-1}\left(\frac{\delta}{\tilde{C}}\right)} \geq \kappa_{n_{DP}(\delta)}\,.
\]
Inserting this into \eqref{ratedp1} yields \eqref{ratedp2}.
\end{proof}

\section{Particularities of the method}

The aim of this section is to deeper understand the effect of the least error method  as a discretization method for relevant bases.

\begin{enumerate}
\item \textit{The case of the singular basis}\\
Let, for $\mH$ a Hilbert space, the linear 
operator $\mA: H \to \mH$ be compact
and let $(\sigma_n,\hat v^n, \hat u^n)$ be a singular basis of the
compact operator $\mA$. Here,
$(\sigma_n)_n$ stands for the non-increasingly
ordered sequences of positive singular values converging to zero as 
$n \to \infty$ and $(\hat v^n)_n$, $(\hat u^n)_n$ are orthonormal systems
in $H$ and $\mH$, respectively. Then,
\[
\mA^* u = \sum_{i} \sigma_i \scp{u}{\hat u^i} \hat v^i
\]
and with the basis operator $T: \mH \to c_0$, $(Tu)_i = \scp{u}{\hat u^i}$, 
the least error framework for the solution of
$\mA^* u = f^\delta$ 
may be applied to $A = T \mA$. With the
choice $H_n = \mbox{span}(\hat v^1, \ldots, \hat v^n)$, this results in
\be{le_singular}
u^n \in\mbox{argmin}\{ \norm{Tu}_1 \, : \, \forall i=1,2,...,n \, : \ \sigma_i\langle u, \hat u^i\rangle=\dup{ \hat v^i,\fd}\}.
\ee
%
%
% In the singular basis for $\mH$, 
% we may write the operator $\mA$ equivalently
% as $A: H \to c_0$, $(Av)_i = \scp{\mA v}{\hat u^i} = \sigma_i
% \scp{v}{\hat v^i}$. The inverse problem
% for $\mA^*$ is then equivalent to $A^*u = f^\delta$ where
% \[
% A^* u = \sum_{i} \sigma_i \scp{u}{\hat u^i} \hat v^i
% \]
%
% Let the operator $A$ be compact and let $(\sigma_n,u_n,v_n)$ be the singular basis of the compact operator $A^*$, where $\{\sigma_n\}$ stands for the decreasingly ordered sequence of positive singular values converging to zero as $n\to\infty$, and ${u_n}$, ${v_n}$ are complete orthonormal systems in $\ell_1$ and $\overline{\mathcal{R}(A^*)}$, respectively. Then
% \[
% A^*u=\sum \sigma_i\langle u,u_i\rangle v_i
% \]
%  and $\dup{z_n,A^*u}=\dup{z_n,\fd}$ can be written as 
% \[
% \sigma_n\langle u,u_n\rangle=\dup{v_n,\fd}.
% \]
% Consequently, the least error method for $H_n = \mbox{span}(\hat v^1,
% \ldots, \hat v^n)$ means
% \be{le_singular}
% u_n\in\mbox{argmin}\{ \norm{u}_1 \, : \, \forall i=1,2,...,n \, : \ \sigma_i\langle u,u_i\rangle=\dup{v_i,\fd}\}.
% \ee
%
\item \textit{The case of the canonical basis}\\
  % Denote $u=(u^i)_i$ and  
  Consider the canonical basis $(e_n)_n$ in $H=\ell^2$ and
  $H_n = \text{span}(e_1,\ldots,e_n)$.
  Thus,   one can re-formulate \eqref{le}  as
  \be{le_canonical}
  u^n \in\mbox{argmin}\{ \norm{u}_1 \, : \, \forall i=1,2,...,n \, : \ (A^*u)_i 
  =\fd_i\}.
\ee
If one considers the denoising problem, then the operator $A^*$ in this case is just the embedding operator from $\ell^1$ into $\ell^2$ and 
\be{le_denoise}
u^n \in\mbox{argmin}\{ \sum_{i>n}|u_i| \, : \, \forall i=1,2,...,n \, : \ u_i =\fd_i\},
\ee
where the first $n$ components of the regularized solution $u^n$ coincide with the first $n$ components of the noisy data. Since the minimizer of the above problem is attained when $u_i =0$, for all $i>n$, one obtains $$u^n=(\fd_1,\fd_2,...,\fd_n,0,0,...).$$
%Note that $\kappa_n=1$ and consequently
%$\|v_n\|\leq 1$, for all $n$. This yields existence of a weak accumulation point $v\in\ell^2$. Then...
\end{enumerate}

%\begin{remark} Convergence rates can be shown if estimates for the residual $\|Au_n-\fd\|$ are available, either based  i) on a variational source condition or ii) on the condition
%$$e_i=Af_i,$$
%for some basis orthonormal basis $\{e_i\}$ in $\ell^{\infty}$ and some  elements $f_i\in\ell^2$, $i\in\mathbb{N}$ (the proofs are similar to the ones in \cite{BFH13}).
%
%\begin{remark} By using similar arguments as in \cite{BK15} one can show
%\begin{itemize}
%\item strong convergence also for ii) in Th. \ref{apriori_le}.
%\item convergence rates
%\end{itemize}
%
%
%\end{remark}
%
%  
%\end{remark}

\section{Conclusions and Remarks}
In this paper we have provided a stability and convergence analyis for the least error method with $\ell^1$ as a preimage space. We have proven convergence rates under a source condition, even with respect to the norm topology, and shown that the method indeed leads to sparse approximations. The analysis includes detailed investigations on the source elements, which are crucial for stability estimates leading to ideal $O(\delta)$ convergence rates.

Future research will be concerned with an efficient implementation of the method as well as numerical tests.
Moreover we are working on an extension of the approach to a sparsity enhancing method in a function space setting with spaces of Radon measures in place of $\ell^1$.

\section*{Acknowledgments}
The second author gratefully acknowledges financial support by the Austrian Science Fund FWF under grant I 2271.
The second and third author are supported by the Karl Popper Kolleg ``Modeling --  Simulation -- Optimization'' funded by the Alpen-Adria-Universit\"at Klagenfurt and by the Carinthian Economic Promotion Fund (KWF).
The Institute of Mathematics and
Scientific Computing of the University of Graz, with which the first author
is affiliated, is a member of NAWI Graz
(\texttt{http://www.nawigraz.at}).

\bigskip
\bibliographystyle{siam}
\bibliography{least_error_ell1}

\end{document}